\newtheorem{thm}{Theorem}[section]
\newtheorem{lemma}[thm]{Lemma}
\newtheorem{ques}[thm]{Question}
\newtheorem{defn}[thm]{Definition}
\newtheorem{rem}[thm]{Remark}
\def \N {\mathbb N}
\def \C {\mathbb C}
\def \Z {\mathbb Z}
\def\FS {{\bf FS}}
\def\FP {{\bf FP}}
\def\AP {{\bf AP}}
\def\GP {{\bf GP}}
\numberwithin{equation}{section}
\begin{document}

\title[]{Monochromatic sums and products with additive or multiplicative shifts in natural numbers}

	\author[]{Wen Huang, Song Shao, Tianyi Tao, Rongzhong Xiao, and Ningyuan Yang}	
	\address[Wen Huang]{School of Mathematical Sciences, University of Science and Technology of China, Hefei, Anhui, 230026, PR China}
	\email{wenh@mail.ustc.edu.cn}
	
	\address[Song Shao]{School of Mathematical Sciences, University of Science and Technology of China, Hefei, Anhui, 230026, PR China}
	\email{songshao@ustc.edu.cn}
	
	\address[Tianyi Tao]{School of Mathematical Sciences, Fudan University, Shanghai, 200433, PR China}
	\email{tytao20@fudan.edu.cn}
	
	\address[Rongzhong Xiao]{School of Mathematical Sciences, University of Science and Technology of China, Hefei, Anhui, 230026, PR China \& Aix-Marseille Université, CNRS, Institut de Mathématiques de Marseille, Marseille, France}
	\email{xiaorz@mail.ustc.edu.cn}
	
	\address[Ningyuan Yang]{School of Mathematical Sciences, Fudan University, Shanghai, 2004-33, PR China}
	\email{nyyang23@m.fudan.edu.cn}
	
	\subjclass[2020]{Primary: 05D10; Secondary: 37B20, 03E05.}
	\keywords{Central sets, Finite sums, Idempotents, Topological dynamical system, Ultrafilters.}

\begin{abstract}

In this paper, we prove that for any finite coloring of $\N$, there are $\lambda,\rho\in \N$ such that there exist (infinitely many) pairs $(x,y),(u,v)\in \N^2$ such that the two sets $\{\lambda x,\lambda y,xy,\lambda(x+y)\}$ and $\{u+\rho,v+\rho,uv+\rho,u+v\}$ are monochromatic. By further employing related arguments, we can provide two different proofs for a special case of Milliken–Taylor theorem.

\end{abstract}

\maketitle
\section{Introduction}
\subsection{Background and motivation}
Throughout the paper, all semigroups, mentioned by us, are discrete and infinite.  
Now, let us recall the definition of the finite sums.

\begin{defn}
Let $(S,+)$ be a semigroup, and $\{s_i\}_{i\in \Z}\subseteq S$.
\begin{itemize}
\item For a non-empty finite subset $\alpha=\{\alpha_1<\alpha_2<\dots<\alpha_k\}\subseteq\Z$, let
\[
s_\alpha=s_{\alpha_1}+s_{\alpha_2}+\dots+s_{\alpha_k}.
\]
\item For a non-empty set $I\subseteq\Z$, denote the finite sum of $\{s_i\}_{i\in I}$ by
\[
\FS_{(S,+)}(\{s_i\}_{i\in I})=\{s_\alpha:\ \alpha\ \text{is a non-empty finite subset of}\ I\}.
\]
\end{itemize}
When the index set $I=\{a_n<\cdots<a_m\}$, we write $\FS_{(S,+)}(s_{a_n},\ldots,s_{a_m})$ for $\FS_{(S,+)}(\{s_i\}_{i\in I})$ at some cases. 
\end{defn}

In 1974, Hindman proved the following result, which is known as the Finite Sum Theorem or Hindman's theorem at present. 

\begin{thm}\label{N0}
$($\cite[Theorem 3.1]{H74}$)$ For any finite coloring of $\N$, there is a sequence $\{a_i\}_{i=0}^{\infty}$ in $\N$ such that $\FS_{(\N,+)}(\{a_i\}_{i=0}^{\infty})$ is monochromatic.
\end{thm}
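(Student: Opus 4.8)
The plan is to use the algebraic structure of the Stone--\v Cech compactification $\beta\N$, following the Galvin--Glazer approach via idempotent ultrafilters (as signalled by the keywords \emph{idempotents} and \emph{ultrafilters}). First I would identify $\beta\N$ with the space of ultrafilters on $\N$, a compact Hausdorff space whose basic open sets are $\ol{A}=\{p:A\in p\}$ for $A\subseteq\N$. I would then extend addition by declaring, for ultrafilters $p,q$, that $A\in p+q$ precisely when $\{n\in\N:\,-n+A\in q\}\in p$, where $-n+A=\{m\in\N:\,n+m\in A\}$. The first point to check is that $+$ is associative and that for each fixed $q$ the translation $\rho_q:p\mapsto p+q$ is continuous, since $\rho_q^{-1}(\ol{A})=\ol{\{n:\,-n+A\in q\}}$ is open; thus $(\beta\N,+)$ is a compact right-topological semigroup containing $\N$ as a dense subsemigroup.

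The second step is to produce an idempotent. By the Ellis--Namakura lemma, every compact right-topological semigroup contains an element $p$ with $p+p=p$, and I would fix such an idempotent ultrafilter $p\in\beta\N$. The proof of Ellis--Namakura itself passes, via Zorn's lemma, to a minimal closed subsemigroup and then exploits right-continuity; this existence statement is the deepest ingredient on which everything else rests.

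The third step converts the idempotent into the desired sequence. Given a finite coloring $\N=C_1\cup\cdots\cup C_r$, exactly one color class $A:=C_i$ lies in $p$ because $p$ is an ultrafilter. The key combinatorial lemma is that $A\in p=p+p$ forces $A^\star:=\{x\in A:\,-x+A\in p\}\in p$, and moreover that $x\in A^\star$ implies $-x+A^\star\in p$ (this follows by unwinding $-x+A\in p=p+p$ and using $-y+(-x+A)=-(x+y)+A$). Using this I would build $\{a_i\}_{i=0}^{\infty}$ recursively: having chosen $a_0,\ldots,a_n$ so that every nonempty finite sum $a_\alpha=\sum_{i\in\alpha}a_i$ with $\alpha\subseteq\{0,\ldots,n\}$ lies in $A^\star$, the set $A^\star\cap\bigcap_\alpha(-a_\alpha+A^\star)$ is a finite intersection of members of $p$, hence nonempty, and any of its elements may serve as $a_{n+1}$. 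By construction $\FS_{(\N,+)}(\{a_i\}_{i=0}^{\infty})\subseteq A$, so the finite-sum set is monochromatic.

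I expect the main obstacle to be the passage through the idempotent: verifying the self-referential lemma $x\in A^\star\Rightarrow -x+A^\star\in p$ and organizing the recursion so that the finite-sum condition is preserved at every stage. The associativity and continuity checks for $+$ on $\beta\N$ are routine but must be set up carefully, and the existence of the idempotent rests on the nontrivial Ellis--Namakura lemma, so the conceptual weight of the argument lies in translating the purely algebraic identity $p+p=p$ into the combinatorial stability of $A^\star$ under the partial sums.
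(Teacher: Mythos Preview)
Your proposal is correct: it is precisely the Galvin--Glazer ultrafilter argument, and this is the approach the paper itself adopts. Theorem~\ref{N0} is only cited in the paper, but the $d=1$ case of the ultrafilter proof in Section~\ref{sec4} is exactly your recursion (with $A^\star$ playing the role of the sets $D_n$ there), resting on the same key identity \eqref{idem}, namely $A\in p=p+p\Rightarrow\{s:A-s\in p\}\in p$.
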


After Hindman's finite sum theorem, Furstenberg \cite[Definition 8.3]{F} introduced the notion of central sets for the semigroup $(\N,+)$ by using dynamical language. Based on this, Furstenberg provided a dynamical proof for Theorem \ref{N0}. Later, Bergelson and Hindman \cite{BH90} generalized Furstenberg's proof to general semigroups. In fact, one can also find an ultrafilter (For its definition, see Section \ref{subsec2-1}.) proof from Glazer (see \cite[Section 10]{C77}). In \cite[First paragraph of Section 3, Chapter 8]{F}, Furstenberg wrote the following sentence to illustrate the relation between two proofs:

 \emph{``Glazer's proof of Hindman's theorem runs along very similar lines to the present proof (using dynamical approach) but without the dynamical appurtenances.''}

At present, by the partition regularity of central sets, we know that Hindman's theorem can be deduced from the following. 

\begin{thm}\label{N1}
$($\cite[Theorem 16.4]{HS}$)$ Let $(S,+)$ be a semigroup and $A\subseteq S$ be central (see Definition \ref{defn}) in $(S,+)$. Then there is a sequence $\{a_i\}_{i=0}^{\infty}$ in $S$ such that $$\FS_{(S,+)} (\{a_i\}_{i=0}^{\infty})\subseteq A.$$
\end{thm}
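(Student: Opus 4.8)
The plan is to prove the statement through the algebraic structure of the Stone--Čech compactification $\beta S$, following the idempotent-ultrafilter approach underlying \cite[Theorem 16.4]{HS}. Recall that $\beta S$ carries a semigroup operation extending $+$, characterized on sets by the rule that, for $B\subseteq S$, $B\in p+q$ exactly when $\{s\in S:-s+B\in q\}\in p$, where $-s+B=\{t\in S:s+t\in B\}$. By the definition of central sets (Definition \ref{defn}), there is a minimal idempotent $p\in\beta S$ with $A\in p$; for this theorem I will use only that $p$ is an idempotent ultrafilter, i.e. $p+p=p$ and $\emptyset\notin p$. The idempotency unwinds to the convenient equivalence that, for every $B\subseteq S$, one has $B\in p$ if and only if $\{s\in S:-s+B\in p\}\in p$.

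The technical heart is a pair of facts about the ``starred'' version of a set: for $B\in p$ set $B^\star=\{x\in B:-x+B\in p\}$. First I would show $B^\star\in p$; since $B\in p=p+p$ gives $\{x:-x+B\in p\}\in p$, intersecting with $B\in p$ and using that a filter is closed under finite intersections yields $B^\star\in p$. Second, and this is the step I expect to be the main obstacle, I would show that $x\in B^\star$ implies $-x+B^\star\in p$. The clean route is the set identity $-x+B^\star=(-x+B)^\star$, itself a consequence of $-(x+y)+B=-y+(-x+B)$ obtained by unravelling the definitions; applying the first fact to the set $-x+B$ (which lies in $p$ precisely because $x\in B^\star$) then gives $(-x+B)^\star\in p$, hence $-x+B^\star\in p$. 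Getting this identity and the membership bookkeeping exactly right---distinguishing $-x+B^\star$ from the weaker $-x+B$---is the delicate point.

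With these two facts in hand, I would construct the sequence recursively, maintaining the invariant that $a_\alpha\in A^\star$ for every non-empty finite $\alpha\subseteq\{0,1,\dots,n\}$. For the base step, $A^\star\in p$ is non-empty, so choose $a_0\in A^\star$. For the inductive step, assume $a_0,\dots,a_n$ have been chosen; each of the finitely many sums $a_\alpha$ over non-empty $\alpha\subseteq\{0,\dots,n\}$ lies in $A^\star$, so the second fact gives $-a_\alpha+A^\star\in p$, and intersecting these with $A^\star$ produces a finite intersection of members of $p$, which is again in $p$ and hence non-empty; pick $a_{n+1}$ in it. A case analysis on a non-empty $\beta\subseteq\{0,\dots,n+1\}$---namely $\beta\subseteq\{0,\dots,n\}$, or $\beta=\{n+1\}$, or $\beta=\alpha\cup\{n+1\}$ with $\emptyset\neq\alpha\subseteq\{0,\dots,n\}$---verifies the invariant for $\{0,\dots,n+1\}$, using in the last case that $a_{n+1}\in-a_\alpha+A^\star$ forces $a_\alpha+a_{n+1}=a_{\alpha\cup\{n+1\}}\in A^\star$. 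The invariant finally yields $\FS_{(S,+)}(\{a_i\}_{i=0}^\infty)\subseteq A^\star\subseteq A$, as desired.
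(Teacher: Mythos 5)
Your proof is correct and is the standard Galvin--Glazer idempotent argument; the key identity $-x+B^\star=(-x+B)^\star$ and the recursion are all handled properly, and you rightly note that only idempotency (not minimality) of $p$ is used. The paper itself only cites this statement from \cite[Theorem 16.4]{HS} without proof, but your argument coincides with the $d=1$ case of the paper's own ultrafilter proof in Section \ref{sec4} (the sets $D_n=B_n\cap\{s\in S: B_n-s\in p\}$ there play the role of your $B^\star$), so this is essentially the same approach.
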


Since there are two semigroup operations $+$ and $\cdot$ on $\N$, one naturally seeks to search for those structures containing additive and multiplicative operation simultaneously. For instance, Bergelson and Hindman built the following.
\begin{thm}\label{special}
$($\cite[Corollary 5.5 and Theorem 5.6]{BH90}$)$ For any finite coloring of $\N$, there is some monochromatic subset such that it is both central in $(\N,+)$ and $(\N,\cdot)$. Moreover, there is some ultrafilter $p$ such that for all $C\in p$, $C$ is both central in $(\N,+)$ and $(\N,\cdot)$.
\end{thm}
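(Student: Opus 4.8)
The plan is to work in the Stone--\v Cech compactification $\beta\N$, exploiting that it carries two compact right-topological semigroup structures, $(\beta\N,+)$ and $(\beta\N,\cdot)$, obtained by extending the two operations on $\N$. I would use the standard characterization that a subset $A\subseteq\N$ is central in $(\N,+)$ (resp. in $(\N,\cdot)$) precisely when $A$ belongs to some minimal idempotent of $(\beta\N,+)$ (resp. of $(\beta\N,\cdot)$); here an idempotent is \emph{minimal} when it lies in the smallest two-sided ideal $K(\beta\N,+)$ (resp. $K(\beta\N,\cdot)$). With this dictionary both assertions follow from a single object: if I can produce an ultrafilter $p$ that is simultaneously a minimal idempotent of $(\beta\N,+)$ and of $(\beta\N,\cdot)$, then for any finite coloring of $\N$ the unique color class $C\in p$ is central in both structures, which gives the first claim, and every $C\in p$ is central in both, which gives the ``moreover''. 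Thus the whole theorem reduces to exhibiting a common minimal idempotent.

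The engine is the distributive law $n\cdot(x+y)=n\cdot x+n\cdot y$, which ties the two operations together. First I would record that each $n\in\N$ lies in the topological center of $(\beta\N,\cdot)$, so the left translation $\lambda_n\colon q\mapsto n\cdot q$ is continuous; by distributivity $\lambda_n$ coincides on $\beta\N$ with the continuous endomorphism of $(\beta\N,+)$ induced by the additive homomorphism $x\mapsto n\cdot x$. Using this, together with the fact that such additive homomorphisms respect the smallest additive ideal (here it is useful that $n\N$ is a member of every additive idempotent, since the homomorphism $x\mapsto x\bmod n$ sends idempotents to $0$), I would show that $\lambda_n$ maps $\overline{K(\beta\N,+)}$ into itself for every $n\in\N$, and then upgrade from principal $n$ to arbitrary $q\in\beta\N$ by continuity of the right translations $\rho_p$ and density of $\N$. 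Combined with the easier fact that $\overline{K(\beta\N,+)}$ is a right ideal of $(\beta\N,\cdot)$, this yields the key lemma: $\overline{K(\beta\N,+)}$ is a closed two-sided ideal of $(\beta\N,\cdot)$.

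Granting the lemma, $I:=\overline{K(\beta\N,+)}$ is a compact right-topological semigroup under $\cdot$, hence contains an idempotent, and being a two-sided ideal it contains the smallest multiplicative ideal $K(\beta\N,\cdot)$. I would therefore run a minimal-left-ideal selection inside $I$ to choose an idempotent $p$ that is minimal in $(I,\cdot)$; since $I$ is a two-sided ideal, such $p$ is minimal in all of $(\beta\N,\cdot)$, so every member of $p$ is multiplicatively central. Because $p$ is produced inside $I=\overline{K(\beta\N,+)}$, it simultaneously lies in the closure of the minimal additive idempotents, which is exactly the condition that every member of $p$ be additively central. Then $p$ is the desired common minimal idempotent and the theorem follows.

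The crux --- and the only genuinely two-operational step --- is the key lemma of the second paragraph: showing that multiplication does not destroy the additive smallest-ideal structure. The difficulty is that only one-sided continuity is available in each structure (right translations are continuous, left ones generally are not) and the multiply-by-$n$ maps are far from surjective, so minimality in the image does not transfer for free; one must lean precisely on the distributive law and on $\N$ lying in the topological center of $(\beta\N,\cdot)$ to push the argument through. Once the two smallest ideals are shown to interact correctly, the existence of the common minimal idempotent --- and with it both conclusions --- is essentially forced by compactness.
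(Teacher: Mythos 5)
Your overall strategy is the standard one (it is essentially how Bergelson and Hindman argue), but there are two genuine problems in the execution. First, the reduction in your opening paragraph is to an object that does not exist: by a theorem of Hindman on sums equal to products in $\beta\N$ (see \cite[Chapter 13]{HS}), no $p\in\beta\N$ satisfies $p+p=p=p\cdot p$, so there is no common additive/multiplicative idempotent, minimal or otherwise. What the theorem actually needs --- and what your third paragraph silently switches to --- is a minimal idempotent of $(\beta\N,\cdot)$ lying in the set of ultrafilters \emph{all of whose members are additively central}; that set is the closure of the set $E(K(\beta\N,+))$ of minimal \emph{idempotents} of $(\beta\N,+)$, not the closure of the smallest ideal $K(\beta\N,+)$ itself. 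This is where the real gap sits: from $p\in\overline{K(\beta\N,+)}$ you can only conclude that every member of $p$ meets $K(\beta\N,+)$, i.e.\ is additively piecewise syndetic, and piecewise syndetic sets need not be central. So the sentence ``it simultaneously lies in the closure of the minimal additive idempotents'' does not follow from anything you proved, and the additive half of the conclusion is unestablished.

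The repair is to run your $\lambda_n$ argument one level lower. For $n\in\N$, $\lambda_n(x)=nx$ extends to a continuous additive endomorphism of $\beta\N$ with image $\overline{n\N}$; since $n\N$ belongs to every additive idempotent (your $x\mapsto x\bmod n$ observation), $\overline{n\N}$ meets $K(\beta\N,+)$, whence $K(\overline{n\N},+)=\overline{n\N}\cap K(\beta\N,+)$ and $\lambda_n$ carries minimal additive idempotents to minimal additive idempotents. Passing to limits along the continuous right translations $\rho_p$ exactly as you do shows that $\overline{E(K(\beta\N,+))}$ is a closed \emph{left} ideal of $(\beta\N,\cdot)$. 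That already suffices: a closed left ideal contains a minimal left ideal and hence a minimal idempotent of $(\beta\N,\cdot)$, every member of which is then multiplicatively central and, by membership in $\overline{E(K(\beta\N,+))}$, additively central. In particular the two-sidedness you invoke is unnecessary, and your claim that the right-ideal half is ``easier'' is backwards: for nonprincipal $q$ the map $m\mapsto m\cdot q$ is \emph{not} an additive homomorphism (the distributive law $(m+m')\cdot q=m\cdot q+m'\cdot q$ fails in $\beta\N$), so that direction is not available by your reasoning. Note finally that the paper itself does not prove this statement but quotes it from \cite{BH90}, so the comparison here is with the source's argument rather than with a proof in the text.
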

By Theorem \ref{special} and Theorem \ref{N1}, one can prove that for any finite coloring of $\N$, there are two sequences $\{x_i\}_{i\ge 0}$ and $\{y_i\}_{i\ge 0}$ in $\N$ such that the set $$\FS_{(\N,+)}(\{x_i\}_{i\ge 0})\cup \FS_{(\N,\cdot)}(\{y_i\}_{i\ge 0})$$ is monochromatic.
Hence, it is natural to ask the following question:

\emph{Is it true that for any finite coloring of $\N$, there is a sequence $\{x_i\}_{i\ge 0}$ in $\N$ such that the set $$\FS_{(\N,+)}(\{x_i\}_{i\ge 0})\cup \FS_{(\N,\cdot)}(\{x_i\}_{i\ge 0})$$ is monochromatic ?}

 Unfortunately, it fails. One can see \cite[Theorem 2.15]{H80} or \cite[Theorem 5]{HML} for related counterexamples.
However, for a sequence with finite length rather than infinite length, the case is unknown. Clearly, the simplest case is the following question, which was asked by Hindman on numerous occasions. 

\begin{ques}\label{q1}
$($\cite[Question 3]{HLS}$)$ Is it true that for any finite coloring of $\N$, there exist $x,y\in \N$ such that $\{x,y,xy,x+y\}$ is monochromatic ? 
\end{ques}
The question was first studied by Graham (see \cite[Theorem 4.3]{H79}), who proved that for any $2$-coloring of $\{1,\ldots,252\}$, one can always find monochromatic $\{x,y,xy,x+y\}$, and by Hindman, who showed that for any $2$-coloring of $\{2,\ldots,990\}$, such monochromatic pattern always exists. In 2022, Bowen \cite{B22} proved that for any $2$-coloring of $\N$ and any $k\in \N$, there are monochromatic sets of the form $\{x,y,xy,x+ky\}$. 

For general case, there are only a few results. In Moreira's breakthrough paper \cite{M17}, he proved that for any finite coloring of $\N$, there exist $x,y\in \N$ such that $\{x,xy,x+y\}$ is monochromatic. Later, for this result, Alweiss gave a new proof in \cite{A22}.

Likely, one can consider Question \ref{q1} on some semirings and (finite) fields. For related results, one can see \cite{S10,GS,BM17,BS22, X24,A23,TY,K24}.
\subsection{Two results on Question \ref{q1}}
Next, two results on similar forms as $\{x,y,xy,x+y\}$ will be provided. Before stating them, we introduce some notions. We say that a subset $A$ of $\N$ contains $k$-\AP\ (The notation \AP\ comes from the phrase ``arithmetic progression".) if there is an arithmetic progression $\{a,a+d,\dots,a+(k-1)d\}\subseteq A$ and it contains $k$-\FS\ if $\FS_{(\N,+)}(\{a_i\}_{i=1}^{k})\subseteq A$. Likely, one can give the definitions for $k$-\GP\ (The notation \GP\ comes from the phrase ``geometric progression".) and $k$-\FP\ (The notation \FP\ comes from the phrase ``finite product".) when the addition is replaced by the multiplication.

\begin{thm}\label{simple}
For any finite coloring of $\mathbb N$, there exists some $\lambda\in\mathbb N$ such that for any $k\in\mathbb N$, there exist a finite subet $A$ of $\N$ containing $k$-\AP\ and $k$-\FS, and a central subset $B$ in $(\N,+)$ such that
\[\lambda A\cup\lambda B\cup\lambda(A+B)\cup AB\]
is monochromatic.
\end{thm}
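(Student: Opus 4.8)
The plan is to pass to the Stone--\v{C}ech compactification $\beta\N$ and to reduce the whole statement to the choice of a single minimal idempotent of $(\beta\N,+)$ together with one cleverly chosen colour class, arranged so that the \emph{same} idempotent simultaneously governs the additive data ($k$-\AP, $k$-\FS, centrality of $B$) and the multiplicative data (the products $ab$ and the scaling by $\lambda$). Throughout I write $n^{-1}C=\{m\in\N:nm\in C\}$ and $-x+C=\{y\in\N:x+y\in C\}$, and I use the multiplicative product of ultrafilters: $C\in u\cdot v$ iff $\{x:x^{-1}C\in v\}\in u$.

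First I would fix a minimal idempotent $e=e+e$ of $(\beta\N,+)$, so that every member of $e$ is central in $(\N,+)$. The decisive move is that I would \emph{not} ask the target colour class to lie in $e$: instead I let $D$ be the unique colour class belonging to the ultrafilter $e\cdot e$ (the square of $e$ in $(\beta\N,\cdot)$). By the definition of the multiplicative product this single choice delivers the crucial coherence
\[
D_e:=\{x\in\N:x^{-1}D\in e\}\in e,
\]
i.e. $a^{-1}D\in e$ for $e$-almost every $a$, which is exactly what will push the products $ab$ into $D$. I would then pick any $\lambda\in D_e$ (so that $\lambda^{-1}D\in e$) and form
\[
A_{\max}=\lambda^{-1}D\cap D_e\cap(\lambda^{-1}D)^{\star},\qquad (\lambda^{-1}D)^{\star}:=\{x:-x+\lambda^{-1}D\in e\}.
\]
Since $\lambda^{-1}D,D_e\in e$ and, by idempotency, $(\lambda^{-1}D)^{\star}\in e$, the set $A_{\max}$ lies in $e$ and is therefore central in $(\N,+)$.

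Now fix $k$. Being central, $A_{\max}$ contains a $k$-term arithmetic progression (van der Waerden, or the Central Sets Theorem) and, by Theorem \ref{N1}, a $k$-term finite-sum set; I would let $A$ be the finite union of these two configurations, so $A\subseteq A_{\max}$ contains $k$-\AP\ and $k$-\FS. Finally I would set
\[
B=\lambda^{-1}D\cap\bigcap_{a\in A}\bigl(-a+\lambda^{-1}D\bigr)\cap\bigcap_{a\in A}a^{-1}D.
\]
For each $a\in A$ one has $a\in(\lambda^{-1}D)^{\star}$, hence $-a+\lambda^{-1}D\in e$, and $a\in D_e$, hence $a^{-1}D\in e$; as $A$ is finite, $B$ is an intersection of members of $e$, so $B\in e$ is central in $(\N,+)$. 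The verification is then immediate: for $a\in A$, $b\in B$ one reads off $\lambda a,\lambda b\in D$ (from $A,B\subseteq\lambda^{-1}D$), $\lambda(a+b)\in D$ (from $b\in -a+\lambda^{-1}D$) and $ab\in D$ (from $b\in a^{-1}D$), so $\lambda A\cup\lambda B\cup\lambda(A+B)\cup AB\subseteq D$ is monochromatic.

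The step I expect to be the crux is precisely the compatibility between the two semigroup structures: making $B$ central forces one to work inside a minimal \emph{additive} idempotent and to use its additive star-sets, whereas controlling the products $ab$ is a purely \emph{multiplicative} demand, and no ultrafilter is at once an additive and a multiplicative idempotent. The argument hinges on the observation that one need not place $D$ inside $e$ at all: defining $D$ through the multiplicative square $e\cdot e$ yields the coherence $D_e\in e$ for free, and this is what lets a single honest additive idempotent carry both roles. If this device were to fail, the fallback would be to invoke the Bergelson--Hindman machinery behind Theorem \ref{special} to manufacture an additive idempotent whose members interact well with multiplication, but the reduction above suggests that heavier input is unnecessary here.
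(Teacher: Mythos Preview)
Your argument is correct. The paper, however, proves this theorem by a dynamical method: it builds the system $X=\{1,\dots,c\}^{\N_0\times\N_0}$ with the skew shift $(\sigma_s x)(m,n)=x(m,n+sm)$, lifts the colouring $\xi_0$ to $\xi(m,n)=\xi_0(n)$, and then works with the $p$-limit $p\xi$ for a minimal additive idempotent $p$; the induced colouring $\xi_1(m)=(p\xi)(m,0)$ picks out the target colour, and the rest is recurrence. Unwinding their construction one sees that $\xi_1(m)=i$ exactly when $m^{-1}C_i\in p$, so the colour class they land in is precisely your $D\in e\cdot e$, and their recurrence sets $E_k$ play the role of your star-set $(\lambda^{-1}D)^\star$ together with the shifts $-a+\lambda^{-1}D$. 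In other words, the two proofs are the same idea in two languages; the paper even remarks, immediately after proving the companion Theorem \ref{simple2} by an ultrafilter argument of your flavour, that Theorem \ref{simple} admits such a proof as well. What your route buys is directness and brevity (no auxiliary dynamical system, no continuity argument to pass from $U_N$ to $U_\lambda$); what the paper's route buys is a warm-up that showcases the dynamical machinery subsequently used to prove the higher-dimensional Theorem \ref{higher}.
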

Immediately, from the above theorem, we know that for any finite coloring of $\N$, there is $a\in \N$ such that there are (infinitely many) pairs $(x,y)\in \N^2$ such that the set $\{ax,ay,xy,a(x+y)\}$ is monochromatic.
\begin{thm}\label{simple2}
For any finite coloring of $\mathbb N$, there exists some $\rho\in\mathbb N$ such that for any $k\in\mathbb N$, there exist a finite subset $A$ of $\N$ containing $k$-\AP, $k$-\GP, $k$-\FS\ and $k$-\FP, and a subset $B$ which is both central in $(\N,+)$ and $(\N,\cdot)$ such that
\[(\rho+A)\cup(\rho+B)\cup(\rho+AB)\cup (A+B)\]
is monochromatic.
\end{thm}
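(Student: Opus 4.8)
The plan is to prove Theorem \ref{simple2} by combining the idempotent ultrafilter machinery with a careful "transfer" between the multiplicative and additive structures, generalizing the strategy implicit in Theorem \ref{special}. The key object will be an ultrafilter $p$ on $\N$ that is simultaneously additively and multiplicatively idempotent-like; Theorem \ref{special} already furnishes an ultrafilter $p$ such that every $C\in p$ is central in both $(\N,+)$ and $(\N,\cdot)$. First I would fix a finite coloring $\chi:\N\to\{1,\dots,r\}$, and pick the monochromatic color class $C_0\in p$ guaranteed by Theorem \ref{special}, so that $C_0$ is central in both semigroups. The desired $\lambda$ should emerge as a carefully chosen element coming from the algebraic structure of the smallest ideal of $\beta\N$; concretely, I expect $\lambda$ to be selected so that translation by $\lambda$ (additively) maps relevant product/progression structures back inside $C_0$.

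The heart of the argument is to produce, for each $k$, a finite set $A$ that simultaneously carries all four arithmetic/geometric/sum/product patterns ($k$-\AP, $k$-\GP, $k$-\FS, $k$-\FP) while staying compatible with the additive shift by $\lambda$. The natural way to do this is to exploit partition regularity: since $C_0$ is central in $(\N,\cdot)$, Theorem \ref{N1} applied to $(\N,\cdot)$ yields an infinite sequence whose finite products lie in $C_0$, giving $k$-\FP\ (and, by Rado/van der Waerden-type consequences of centrality, $k$-\GP); likewise centrality in $(\N,+)$ yields $k$-\FS\ and $k$-\AP. The technical work is to intersect these finitary conclusions inside a single set $A$ and to arrange the set $B$ (central in both operations, obtained as a suitable member of the same ultrafilter $p$ or a shift thereof) so that the four pieces $(\lambda+A)$, $(\lambda+B)$, $(\lambda+AB)$, and $(A+B)$ all receive the \emph{same} color. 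I would control the colors by repeatedly passing to members of $p$: whenever a quantity such as $ab$ or $a+b$ must land in a fixed color class, I use that the relevant set belongs to $p$ (by the idempotent equations $p+p=p$ and $p\cdot p=p$ satisfied on the appropriate subsemigroup) to push it into $C_0$.

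In more detail, the scheme I anticipate is as follows. Using $p\cdot p=p$, the set $\{x: x^{-1}C_0\in p\}\in p$, and iterating gives an \FP-set inside $C_0$; using $p+p=p$ on the additive side gives an \FS-set; and one shows the shift parameter $\lambda$ can be absorbed because the additively-central set $C_0-\lambda$ (for a suitable $\lambda$ in the minimal idempotent's orbit) again lies in $p$. The crucial mixed terms are $AB$ and $A+B$: to force $\lambda+AB$ and $A+B$ monochromatic with the rest, I would build $A$ and $B$ inductively, at each stage choosing the next generator of $A$ (respectively $B$) inside a member of $p$ small enough that all the finitely many constraints coming from products $ab$ and sums $a+b$ with previously chosen elements are satisfied. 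This is exactly the standard "thinning" argument used in the proof of Theorem \ref{N1}, now run in parallel for two operations and for the cross terms.

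The main obstacle I foresee is the simultaneous control of the two incompatible operations on the cross terms $AB$ and $A+B$: additively-idempotent and multiplicatively-idempotent ultrafilters do not in general coincide, and the shift by $\lambda$ breaks multiplicative homogeneity (so $\lambda+AB$ is neither an additive nor a multiplicative configuration in an obvious way). Overcoming this is where Theorem \ref{special}'s \emph{single} ultrafilter $p$ central for both operations is indispensable, together with a delicate choice of $\lambda$ that reconciles the additive shift with the multiplicative set $A$. I expect the finitary nature of the conclusion (only $k$ generators needed, not an infinite sequence) to be essential: it permits a finite inductive construction in which each of the finitely many color constraints on $AB$, $A+B$, $\lambda+A$, $\lambda+B$ can be met one at a time by intersecting with the appropriate members of $p$, whereas the analogous infinite statement is known to fail by the counterexamples of \cite{H80,HML} cited above.
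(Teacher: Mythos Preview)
Your proposal identifies the right ingredients --- the ultrafilter $p$ from Theorem~\ref{special}, centrality of every member of $p$ in both semigroups, and a thinning argument --- but there is a genuine gap in how you handle the additive shift by $\lambda$ and the cross term $A+B$. You invoke ``the idempotent equations $p+p=p$ and $p\cdot p=p$,'' yet Theorem~\ref{special} (more precisely \cite[Theorem~5.6]{BH90}) only supplies a \emph{minimal idempotent in $(\beta\N,\cdot)$} whose every member is central in both $(\N,+)$ and $(\N,\cdot)$. So $p\cdot p=p$ is available and does let you control the multiplicative piece $AB$, but $p+p=p$ is not; you yourself note that additively and multiplicatively idempotent ultrafilters do not coincide. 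Consequently, from $C_0\in p$ you cannot conclude $C_0-\lambda\in p$ for any $\lambda$, nor that $\{n:C_0-n\in p\}\in p$, and both the choice of $\lambda$ and the constraint forcing $A+B$ into the color class collapse.

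The paper fixes this with one move you are missing: instead of taking the color class in $p$, take it in $p+p$. Some color class $D$ lies in the ultrafilter $p+p$, and then by definition $\{n:D-n\in p\}\in p$; in particular there is $\lambda$ with $D':=D-\lambda\in p$. Now $D'\in p=p\cdot p$ gives $\{n:D'/n\in p\}\in p$, and
\[
E:=\{n:D'/n\in p\}\cap\{n:D-n\in p\}\cap D'\in p
\]
is central in both $(\N,+)$ and $(\N,\cdot)$, so by \cite[Theorems~14.1 and 16.4]{HS} one may choose a finite $A\subseteq E$ containing $k$-\AP, $k$-\GP, $k$-\FS\ and $k$-\FP. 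Setting $B:=D'\cap\bigcap_{a\in A}(D'/a)\cap\bigcap_{a\in A}(D-a)\in p$ finishes the proof: $\lambda+A$, $\lambda+B$, $\lambda+AB\subseteq\lambda+D'\subseteq D$ and $A+B\subseteq D$. No inductive parallel thinning is needed once the color class is placed in $p+p$ rather than in $p$.
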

Clearly, based on the above theorem, we know that for any finite coloring of $\N$, there is $b\in \N$ such that there are (infinitely many) pairs $(u,v)\in \N^2$ such that the set $\{u+b,v+b,uv+b,u+v\}$ is monochromatic.

\subsection{Extensions of Finite Sum Theorem}

Now, we return to the Finite Sum Theorem. Firstly, we rewrite it as the following:

For any finite coloring of $\N$, there is a sequence $\{a_i\}_{i=0}^{\infty}$ in $\N$ such that the set $$ \bigcup_{0\le m\le n}\{x:x\in\FS_{(\N,+)}(\{a_j\}_{j=m}^{n})\}$$ is monochromatic.

Milliken \cite{1975Ramsey} and Taylor \cite{1976A} provided its higher-dimensional generalization as follows:
\begin{thm}\label{higher}
$($\cite{1975Ramsey,1976A}$)$ Let $(S,+)$ be a semigroup. For any $d\in \N$ and any finite coloring of $S^d$, there is a sequence $\{a_i\}_{i=0}^{\infty}$ in $S$ such that the set
\[
\bigcup_{(m_0,\ldots,m_d)\in \Z^{d+1},\atop 0\leq m_0<m_1<\dots<m_d}\{(x_1,\dots,x_d):\ x_i\in\FS_{(S,+)}(\{a_j\}_{j=m_{i-1}}^{m_{i}-1})\ \text{for each}\ 1\leq i\leq d\}
\]
is monochromatic.
\end{thm}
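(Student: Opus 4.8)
The plan is to run the Galvin--Glazer ultrafilter argument inside the Stone--\v Cech compactification $\beta S$, but carried out simultaneously across the $d$ coordinates by means of an iterated product of a single idempotent. Recall that $(\beta S,+)$ is a compact right topological semigroup and hence contains an idempotent $p=p+p$; fix one. For ultrafilters on $S$ one has the iterated (tensor) product $p^{\otimes d}=p\otimes\cdots\otimes p$ on $S^d$, characterised by $B\in p^{\otimes d}$ iff $\{x_1:\{x_2:\cdots\{x_d:(x_1,\dots,x_d)\in B\}\in p\cdots\}\in p\}\in p$. Since the given coloring splits $S^d$ into finitely many classes and $p^{\otimes d}$ is an ultrafilter, exactly one color class $C$ satisfies $C\in p^{\otimes d}$; this $C$ is the monochromatic target. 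For an admissible partial tuple $(y_1,\dots,y_i)$ I write its section $C_{y_1,\dots,y_i}=\{(z_{i+1},\dots,z_d):(y_1,\dots,y_i,z_{i+1},\dots,z_d)\in C\}$, which by the definition of the iterated product lies in $p^{\otimes(d-i)}$ precisely when the relevant projection conditions hold.

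The sequence $\{a_j\}_{j\ge 0}$ is then built by induction, the engine being the standard star-set lemma (see \cite{HS}): for $A\in p$ one has $A^\star=\{x\in A:-x+A\in p\}\in p$, and $-x+A^\star\in p$ for every $x\in A^\star$, so that any finite sum drawn from $A^\star$ with the usual absorption condition stays inside $A$. I would carry the following invariant after $a_0,\dots,a_n$ are chosen: for every $0\le i\le d$ and every way of cutting $i$ consecutive blocks inside $[0,n]$ with block-sums $y_1,\dots,y_i$, the section $C_{y_1,\dots,y_i}$ belongs to $p^{\otimes(d-i)}$, and each $y_\ell$ sits in the appropriate star set of the first-coordinate projection of the preceding section. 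Only finitely many such partial tuples are realizable from indices in $[0,n]$, so each imposes only finitely many $p$-large demands on the next element: that $a_{n+1}$ can be absorbed into any block currently reaching index $n$ (keeping its running sum in the relevant star set) and that $a_{n+1}$ can open a fresh block in the next coordinate, i.e. lie in the first-coordinate projection $\{z:C_{y_1,\dots,y_i,z}\in p^{\otimes(d-i-1)}\}\in p$. Intersecting these finitely many members of $p$ produces a set in $p$, from which $a_{n+1}$ is selected.

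With the sequence in hand, the conclusion follows by unwinding the invariant coordinate by coordinate. Given any $0\le m_0<\cdots<m_d$ and any $x_i\in\FS_{(S,+)}(\{a_j\}_{j=m_{i-1}}^{m_i-1})$, the star-set bookkeeping forces $x_1$ into the first-coordinate projection of $C$, hence $C_{x_1}\in p^{\otimes(d-1)}$; then $x_2$ into the first-coordinate projection of $C_{x_1}$, and so on, yielding $(x_1,\dots,x_d)\in C$. As $C$ is a single color class, the entire displayed union is monochromatic.

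The delicate point, and the step I expect to be the main obstacle, is that a newly added $a_{n+1}$ plays a double role: it may continue the block of the current coordinate, as the largest term of a growing finite sum handled by the absorption $-y+E^\star$, or it may begin the block of the next coordinate, handled by the first-coordinate projection of the current section; and both must remain consistent with every admissible later cutting of the sequence into $d$ consecutive blocks. Formulating the invariant so that these two roles are guaranteed at once, and checking that the star-set lemma propagates membership cleanly across the boundary between coordinate $i$ and coordinate $i+1$, is where the real work lies. The saving feature is that at each stage only finitely many partial tuples are realizable within $[0,n]$, which is exactly what keeps the intersection of all imposed constraints inside $p$ and allows the induction to proceed.
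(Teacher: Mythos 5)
Your proposal is correct and follows essentially the same route as the paper's second (ultrafilter) proof: fix an idempotent $p$, take the color class $C$ in the tensor power $p^{\otimes d}$, and inductively choose $a_n$ from the intersection of the finitely many $p$-large star-set and section constraints imposed by all block decompositions realizable in $[0,n]$, which is exactly the paper's invariant (C1)--(C2) with $C_n=A_n\cap\{s: A_n-s\in p\}$. The ``double role'' issue you flag is precisely the case split the paper carries out when verifying (C2) (whether $a_n$ extends the last block or opens a new coordinate), and it is resolved there exactly as you outline.
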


Although Theorem \ref{higher} is known, we will employ our technique to provide two different proofs for it to exhibit the essence of our technique and extract the following Theorem \ref{multi} and \ref{multi2} in this paper.

\begin{thm}\label{multi}
Let $(S,+)$ be a semigroup and $\circ:S^2\to S$ be a binary operation. For any finite coloring of $S$ and any $D\in\N$, there is a sequence $\{a_i\}_{i=0}^{\infty}$ in $S$ such that for any $1\le d\le D$, the set
\[
\bigcup_{(m_0,\ldots,m_d)\in \Z^{d+1},\atop 0\leq m_0<m_1<\dots<m_d}\{(((x_1\circ x_2)\circ x_3)\circ\cdots)\circ x_d:\ x_i\in\FS_{(S,+)}(\{a_j\}_{j=m_{i-1}}^{m_{i}-1})\ \text{for each}\ 1\leq i\leq d\}
\]
is monochromatic, where the color of the set depends on $d$.
\end{thm}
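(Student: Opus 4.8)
The plan is to deduce Theorem \ref{multi} from Theorem \ref{higher} by encoding the iterated $\circ$-products into a single finite colouring of the $D$-fold product $S^D$. Write $P_d(x_1,\dots,x_d)$ for the iterated product $(((x_1\circ x_2)\circ x_3)\circ\cdots)\circ x_d$, so that $P_1(x_1)=x_1$ and $P_d(x_1,\dots,x_d)=P_{d-1}(x_1,\dots,x_{d-1})\circ x_d$. Given the finite colouring $\chi\colon S\to[r]$, the key observation is that one colouring of $S^D$ can simultaneously control the $\chi$-colours of all prefixes $P_1,\dots,P_D$: I would define $\hat\chi\colon S^D\to[r]^D$ by $\hat\chi(x_1,\dots,x_D)=\bigl(\chi(P_1(x_1)),\chi(P_2(x_1,x_2)),\dots,\chi(P_D(x_1,\dots,x_D))\bigr)$, a finite colouring with at most $r^D$ colours. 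The crucial structural feature of $\hat\chi$ is that its $d$-th coordinate depends only on the first $d$ arguments; this ``prefix-determinedness'' is exactly what will let me read off information about length-$d$ tuples from a statement about length-$D$ tuples.

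Next I would apply Theorem \ref{higher} to the semigroup $(S,+)$, the dimension $D$, and the colouring $\hat\chi$. This produces a single sequence $\{a_i\}_{i=0}^\infty$ in $S$ such that the set of all tuples $(x_1,\dots,x_D)$ with $x_i\in\FS_{(S,+)}(\{a_j\}_{j=m_{i-1}}^{m_i-1})$, ranging over all $0\le m_0<m_1<\cdots<m_D$, is monochromatic for $\hat\chi$; call the common colour $(c_1,\dots,c_D)\in[r]^D$. This sequence is the output of the proof, and it is one and the same sequence for every $1\le d\le D$, as the statement demands.

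The final step is the extraction, which I expect to be the only place requiring care. Fix $1\le d\le D$ and take any admissible length-$d$ tuple $(x_1,\dots,x_d)$, i.e.\ one with $x_i\in\FS_{(S,+)}(\{a_j\}_{j=m_{i-1}}^{m_i-1})$ for some $0\le m_0<m_1<\cdots<m_d$. Since infinitely many indices remain available, I can choose $m_d<m_{d+1}<\cdots<m_D$ and pick arbitrary $x_{d+1},\dots,x_D$ from the corresponding (nonempty) blocks, producing an admissible length-$D$ tuple $(x_1,\dots,x_D)$. By the previous step $\hat\chi(x_1,\dots,x_D)=(c_1,\dots,c_D)$, and reading off the $d$-th coordinate, which by prefix-determinedness equals $\chi(P_d(x_1,\dots,x_d))$, gives $\chi(P_d(x_1,\dots,x_d))=c_d$. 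As the length-$d$ tuple was arbitrary, the required set of iterated products is monochromatic with colour $c_d$, which indeed depends on $d$.

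I would emphasise that there is no deep obstacle beyond Theorem \ref{higher} itself: the operation $\circ$ enters only through the bookkeeping colouring $\hat\chi$ and never needs to interact with the additive structure. The single genuine design choice is to make $\hat\chi$ record the colours of all the prefixes $P_1,\dots,P_D$ rather than merely the top product $P_D$, so that the one sequence furnished by Theorem \ref{higher} serves all $d\le D$ at once; the extension argument above is precisely what justifies this choice.
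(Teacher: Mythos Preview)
Your argument is correct and genuinely different from the paper's. The paper does not collapse everything into a single $r^D$-colouring of $S^D$. Instead, it invokes the stronger form, Theorem~\ref{higher2}, which for a \emph{fixed} idempotent $p$ produces at each step a set in $p$ from which the next term may be chosen freely, with the resulting colour depending only on $p$ and the colouring. It applies Theorem~\ref{higher2} \emph{separately} to each of the $D$ pullback colourings $\xi^{(d)}(s_1,\dots,s_d)=\xi(P_d(s_1,\dots,s_d))$ on $S^d$, and then merges the $D$ constructions by intersecting the step-$n$ candidate sets: $a_n\in\bigcap_{d=1}^D C^{(d)}_{n;a_0,\dots,a_{n-1}}$. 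Your route is cleaner for Theorem~\ref{multi} in isolation: it needs only Theorem~\ref{higher} as a black box, applied once, and the prefix-extension trick is completely elementary. The paper's route, on the other hand, is designed so that the same machinery immediately feeds into Theorem~\ref{multi2}, where the range of $d$ is itself governed by earlier terms of the sequence; there one really needs the ``choose $a_n$ from an element of $p$'' freedom to interleave constructions across infinitely many $d$, and your single application to $S^D$ with fixed $D$ would not suffice.
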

If the cases of $d=1$ and $d=2$ of Theorem \ref{multi} correspond to the same color, then Question \ref{q1} has a positive answer.

Next, we will illustrate Theorem \ref{multi} for the semigroup $(\N,+)$ and multiplication operation to let the readers understand what happens. In this case, by Theorem \ref{multi}, one knows that for any finite coloring of $\N$ and any $D\in\N$, there is a sequence $\{a_i\}_{i=0}^{\infty}$ in $\N$ such that for any $1\le d\le D$, the set $$\bigcup_{(m_0,\ldots,m_d)\in \Z^{d+1}, \atop 0\leq m_0<m_1<\dots<m_d}\prod_{i=1}^d\FS_{(\N,+)}(\{a_j\}_{j=m_{i-1}}^{m_{i}-1})$$ is monochromatic, where the color of the set depends on $d$. When $d=D=1$, it is the Finite Sum Theorem. Moreover, it also implies that for any finite coloring of $\N$, there are $a,x,y\in \N$ such that the set $\{ax,ay,xy,a(x+y)\}$ is monochromatic. Note that Theorem \ref{simple2} cannot be derived directly. Its proof relies on a special multiplicative minimal idempotent ultrafilter, which stems from the deep interplay between multiplication and addition within ultrafilter theory.

Specially, when $(S,+)=(\N,+)$, we have the following result, which is similar to Theorem \ref{multi}. 
\begin{thm}\label{multi2}
For any binary operation $\circ:\N^2\to \N$ and any finite coloring of $\N$, there is a sequence $\{a_i\}_{i=0}^{\infty}$ in $\N$ such that the set
\begin{align*}
\bigcup_{m_0\in\N}\bigcup_{d\in\FS_{(\N,+)}(\{a_j\}_{j=0}^{m_{0}-1})}\bigcup_{(m_1,\ldots,m_d)\in \N^{d},\atop m_0<m_1<\dots<m_d}&\{(((x_1\circ x_2)\circ x_3)\circ\cdots)\circ x_d:\\
&x_i\in\FS_{(\N,+)}(\{a_j\}_{j=m_{i-1}}^{m_{i}-1})\ \text{for each}\  1\leq i\leq d\}
\end{align*}
is monochromatic.
\end{thm}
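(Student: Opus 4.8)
The plan is to give an ultrafilter proof in the spirit of the Galvin--Glazer argument behind Theorem \ref{higher}. Fix the coloring $\chi:\N\to\{1,\dots,r\}$ and an idempotent ultrafilter $p\in\beta\N$ (so $p+p=p$), and extend $\circ$ to $\beta\N$ so that for $A\subseteq\N$ and $q,s\in\beta\N$,
\[
A\in q\circ s\iff\{x\in\N:\{y\in\N:x\circ y\in A\}\in s\}\in q .
\]
Writing $\mu_d(x_1,\dots,x_d)=(((x_1\circ x_2)\circ x_3)\circ\cdots)\circ x_d$ for the left-iterated product, set $P_1=p$ and $P_d=P_{d-1}\circ p$, matching $\mu_d=\mu_{d-1}\circ x_d$. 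Each $P_d$ is an ultrafilter, so there is a unique colour $c_d$ with $\chi^{-1}(c_d)\in P_d$; this defines an auxiliary $r$-colouring $\psi:\N\to\{1,\dots,r\}$ of the \emph{depths} by $\psi(d)=c_d$. Since $p$ is an ultrafilter, there is a unique colour $c$ with $\psi^{-1}(c)\in p$; this $c$ will be the colour of the monochromatic set. I stress that no fixed-point argument is needed: $\psi$ is read off from $p$, and $c$ is selected afterwards.

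I would then build $\{a_i\}_{i=0}^\infty$ by a single recursion enforcing two families of requirements at once. The \emph{depth requirement} is that $\FS_{(\N,+)}(\{a_i\}_{i=0}^\infty)\subseteq\psi^{-1}(c)$; since $\psi^{-1}(c)\in p$ and $p$ is idempotent, this is exactly the situation handled by the standard argument for Theorem \ref{N1}, and it guarantees that every depth $d$ appearing in the union---being a finite sum of an initial block---satisfies $c_d=c$, hence $\chi^{-1}(c)\in P_d$. The \emph{ingredient requirement} is that for every such $d$, every decomposition $m_0<m_1<\cdots<m_d$, and every $x_i\in\FS_{(\N,+)}(\{a_j\}_{j=m_{i-1}}^{m_i-1})$, one has $\mu_d(x_1,\dots,x_d)\in\chi^{-1}(c)$. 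This is arranged by unfolding $\chi^{-1}(c)\in P_d=P_{d-1}\circ p$ into a nest of $p$-large sets: the set $Q^{(1)}=\{x:\{y:x\circ y\in\chi^{-1}(c)\}\in p\}$ lies in $P_{d-1}$, the analogous set built from $Q^{(1)}$ lies in $P_{d-2}$, and so on down to $P_1=p$; one then forces the block finite sums into the appropriate members of this nest.

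The point that makes the recursion go through is that only finitely many conditions are active when $a_i$ is chosen. Indeed, if $a_i$ is to lie in an ingredient block $\{a_{m_{j-1}},\dots,a_{m_j-1}\}$ for a depth $d$, then $m_0\le i$ and $d\in\FS_{(\N,+)}(\{a_t\}_{t=0}^{m_0-1})$, so $d\le a_0+\cdots+a_{i-1}$ is already bounded by the terms chosen so far; likewise the earlier ingredients $x_1,\dots,x_{j-1}$ range over the finitely many finite sums of the already-constructed blocks. Thus at stage $i$ there are only finitely many triples (depth, position, choice of earlier ingredients) to account for, each contributing a $p$-large constraint, and together with the depth constraint---that every new partial sum remains in $\psi^{-1}(c)$---these form a finite family of $p$-large sets. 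Their intersection is again $p$-large, hence non-empty, and any element of it may be taken as $a_i$; idempotency of $p$ is precisely what lets us force the \emph{finite sums} inside each block into the prescribed sets.

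The main obstacle is the bookkeeping created by the self-referential, unbounded depth: the same sequence must serve both as the reservoir of depths $d$ and as the source of the $d$ ingredient blocks, and $d$ is not fixed in advance. The heart of the argument is therefore the compatibility of these two roles---that $\psi$ can be defined from $p$ before the sequence exists, that the chosen colour $c$ makes the ingredient sets $p$-large exactly for the depths that actually occur, and that the nested $p$-large sets arising from the non-associative product $\mu_d$ can be met inside single Hindman blocks. Once the finiteness-at-each-stage observation is in place these constraints merge into one recursion; a dynamical reformulation paralleling the Furstenberg-style proof of Theorem \ref{higher} can be given alongside it.
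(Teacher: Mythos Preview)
Your proposal is correct and follows essentially the same route as the paper: define the auxiliary depth-colouring $\psi(d)=c_d$ (the paper's $\eta$, with your $c_d$ equal to the paper's $i_d$), pick the colour $c$ with $\psi^{-1}(c)\in p$, and run a single recursion that at each stage intersects finitely many $p$-large constraints coming from the depth requirement and the ingredient requirements for the finitely many depths already generated. The only cosmetic difference is packaging---the paper invokes Theorem~\ref{higher2} as a black box supplying the nested $p$-large sets $C^{(d)}_{n;a_0,\dots,a_{n-1}}$ and $D_{n;a_0,\dots,a_{n-1}}$, while you unfold $P_d=P_{d-1}\circ p$ by hand.
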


\subsection*{Structure of the paper} In Section \ref{sec2}, we give some preliminaries. In Section \ref{sec2.5}, we prove Theorem \ref{simple} and Theorem \ref{simple2}. In Section \ref{sec3}, \ref{sec4}, we provide a dynamical proof and an ultrafilter proof for Theorem \ref{higher} and illustrate the relation between two proofs. In Section \ref{sec5}, we prove Theorem \ref{multi} and Theorem \ref{multi2}. 

\subsection*{Acknowledgement} 
 The first author is supported by National Natural Science Foundation of China (12090012, 12031019, 12090010). The second author is supported by National Natural Science Foundation of China (12371196). The fourth author is supported by National Natural Science Foundation of China (123B2007, 12371196).

\section{Preliminaries}\label{sec2}

In this section, we recall some notions and results.
\subsection{Notations}
Let $\N(\text{resp.}\ \Z,\C)$ denote the natural numbers (resp. integers, comlex numbers) set. Let $\N_0=\{n\in \Z:n\ge 0\}$.

For any semigroup $(S,+)$, $s\in S$ and non-empty subset $A$ of $S$, let $A-s=\{t\in S:s+t\in A\}$. 

For any two non-empty subsets $A$ and $B$ of $\N$ and any $x\in \N$, let $A+B=\{a+b:a\in A,b\in B\},AB=\{ab:a\in A,b\in B\},x+A=\{x+a:a\in A\}$ and $xA=\{xa:a\in A\}$.

For any non-empty set $X$ and any $d\in \N$, let $X^d=\{(x_1,\ldots,x_d):x_i\in X,1\le i\le d\}$.
\subsection{$\beta S$ and central sets}\label{subsec2-1}
Let $S$ be a discrete, infinite topological space. An \textbf{ultrafilter} on $S$ is a non-empty family $p$ of subsets of $S$ sufficing the following:
\begin{itemize}
\item If $A,B\in p$, then $A\cap B\in p$;
\item If $A\in p$ and $A\subset B$, then $B\in p$;
\item For any $E\subset S$, $E\in p\iff E^c\notin p$.
\end{itemize}

Let $\beta S$ be the set of all ultrafilters on $S$. Let $\mathcal{F}_S=\{\hat{E}:=\{p\in \beta S:E\in p\}:E\subseteq S,E\neq \varnothing\}$. By \cite[Lemma 3.17]{HS}, $\mathcal{F}_S$ is a topological basis. By \cite[Theorem 3.18.(a) and Theorem 3.27]{HS}, under such topology deriving from $\mathcal{F}_S$, $\beta S$ is a compact Hausdorff space. And it and the Stone-\v{C}ech compactification of $S$ are homeomorphic. 

If $(S,+)$ is a semigroup, the operation $+$ $:S^2\to S$ can be extended to $\beta S\times \beta S\to\beta S$ (see \cite[Theorem 4.1]{HS}). The resulting operation, which we still use the notion $+$ to denote, will satisfy the following (see \cite[Theorem 4.12.(b)]{HS}):
\[
p+q=\{A\subseteq S:\ \{s\in S:\ \{t\in S:\ s+t\in A\}\in q\}\in p\}.
\]
That is, $A\in p+q$ if and only if $\{s\in S:\ A-s\in q\}\in p$.

By \cite[Theorem 4.1.(b) and Theorem 4.4]{HS}, $(\beta S,+)$ is a compact Hausdorff right topological semigroup. (For more on $(\beta S,+)$, see \cite{HS} or \cite[Chapter 1]{NGL}.)

Next, we give some definitions.
\begin{defn}
	\begin{itemize}
		\item[(1)] $L$ is a left ideal of $(\beta S,+)$ if $\varnothing\neq L\subseteq \beta S$ and $\beta S + L\subseteq L$. 
		\item[(2)] $L$ is a minimal left ideal of $(\beta S,+)$ if $L$ is a left ideal of $(\beta S,+)$ and whenever $J$ is a left ideal of $(\beta S,+)$ and $J\subseteq L$, one has $J=L$. 
		\item[(3)] $p$ is an idempotent in $(\beta S,+)$ if $p\in \beta S$ and $p+p=p$.
		\item[(4)] Let $p_1,p_2$ be two idempotents in $(\beta S,+)$. We say that $p_1\leq_{L}p_2$ if $p_1=p_1+p_2$. 
		\item[(5)] $p$ is a minimal idempotent in $(\beta S,+)$ if $p$ is an idempotent in $(\beta S,+)$ and $p$ is minimal with respect to the order $\leq_L$.
	\end{itemize}
\end{defn}
By \cite[Theorem 1.38.(d) and Corollary 2.6]{HS}, we know that in $(\beta S,+)$, the subset of all minimal idempotents is non-empty. Now, we give the definition of central sets.
\begin{defn}\label{defn}
	A subset $A$ of $S$ is \textbf{central} in $(S,+)$ if $A$ is a member of some minimal idempotent in $(\beta S,+)$ .
\end{defn}
\subsection{Topological dynamical systems}\label{subsec2-2}
We say that a pair $(X,(T_s)_{s\in S})$ is a \textbf{topological dynamical system} if $X$ is a compact metric space, $(S,+)$ is a semigroup, $T_s:X\to X$ is continuous for all $s\in S$ and $T_s\circ T_t=T_{s+t}$ for all $s,t\in S$.

Given a topological dynamical system $(X,(T_s)_{s\in S})$, we define $\varphi:S\to X^{X}$ by $s\mapsto (T_sx)_{x\in X}$, where $X^{X}$ is equipped with the product topology. Since $\beta S$ coincides with the Stone-\v{C}ech compactification of $S$, $\beta S$ can act on $X$ continuously by lifting $\varphi$ from $S$ to $\beta S$.  Let $px$ be the value of the $x$-th coordinate of $\varphi(p)$ for any $p\in\beta S,x\in X$. 

Next, we introduce a simple known lemma.
\begin{lemma}\label{prox}
Let $(X,(T_s)_{s\in S})$ be a topological dynamical system, $x\in X$ and $p\in\beta S$. Then for all non-empty neighborhood $U$ of $px$, 
\[
E:=\{s\in S:\ T_sx\in U\}\in p.
\]
\end{lemma}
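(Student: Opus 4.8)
The plan is to exploit the continuity of the lifted action together with the topology on $\beta S$ generated by the basis $\mathcal{F}_S$. The crucial observation is that the map $S\to X$, $s\mapsto T_sx$, is precisely the $x$-th coordinate of $\varphi$, and that extracting the $x$-th coordinate is itself a continuous operation on $X^X$. Once this is seen, the statement follows by a formal argument, so the proof should be short.

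First I would record the continuous extension. Since $X^X$ (with the product topology) is compact Hausdorff and $\beta S$ is the Stone-\v{C}ech compactification of $S$, the map $\varphi\colon S\to X^X$ extends uniquely to a continuous map $\tilde\varphi\colon\beta S\to X^X$, and by construction $px$ is the $x$-th coordinate of $\tilde\varphi(p)$ for every $p\in\beta S$. Composing with the continuous coordinate projection $\pi_x\colon X^X\to X$ yields a continuous map $\Phi_x:=\pi_x\circ\tilde\varphi\colon\beta S\to X$ with $\Phi_x(q)=qx$ for all $q\in\beta S$; in particular $\Phi_x(p)=px$, and evaluated at the principal ultrafilter attached to $s\in S$ one gets $\Phi_x=T_sx$, since $\tilde\varphi$ extends $\varphi$ and the $x$-th coordinate of $\varphi(s)$ is $T_sx$.

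Next, given a neighborhood $U$ of $px$, I would choose an open set $V$ with $px\in V\subseteq U$. By continuity of $\Phi_x$, the set $\Phi_x^{-1}(V)$ is an open neighborhood of $p$ in $\beta S$. Because $\mathcal{F}_S=\{\hat E:\varnothing\neq E\subseteq S\}$ is a basis for the topology, there is a non-empty $A\subseteq S$ with $p\in\hat A\subseteq\Phi_x^{-1}(V)$; note that $A\in p$ since $p\in\hat A$. Finally I would pass to principal ultrafilters: for each $s\in A$, the ultrafilter $\{B\subseteq S:s\in B\}$ contains $A$, hence lies in $\hat A\subseteq\Phi_x^{-1}(V)$, so $T_sx=\Phi_x(\{B\subseteq S:s\in B\})\in V\subseteq U$, i.e.\ $s\in E$. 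Thus $A\subseteq E$, and since $p$ is closed under supersets we conclude $E\in p$.

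The only point demanding genuine care — and the main (minor) obstacle — is to confirm that the coordinate-wise description of the $\beta S$-action is compatible with the Stone-\v{C}ech extension, namely that $\Phi_x$ really carries $s\in S$ (identified with its principal ultrafilter) to $T_sx$ and $p$ to $px$. Once this compatibility is verified, continuity of the single coordinate map $\Phi_x$ suffices, and the remaining steps are routine topology on $\beta S$.
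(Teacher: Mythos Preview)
Your argument is correct: the continuity of the composite $\Phi_x=\pi_x\circ\tilde\varphi\colon\beta S\to X$, together with the basis $\mathcal{F}_S$ for $\beta S$, yields a basic set $\hat A\ni p$ inside $\Phi_x^{-1}(V)$, and evaluating at principal ultrafilters gives $A\subseteq E$, hence $E\in p$. (There is a small typo where you write ``one gets $\Phi_x=T_sx$''; you mean $\Phi_x(s)=T_sx$.)

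As for comparison with the paper: the paper does not actually supply a proof of this lemma. It is introduced as ``a simple known lemma'' and stated without argument, so there is nothing to compare your approach against. Your proof is exactly the standard one and matches the setup the paper lays out in Subsection~\ref{subsec2-2} (the definition of $px$ via the lifted map $\varphi$ and the basis $\mathcal{F}_S$), so it is entirely in keeping with what the authors had in mind.
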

If we require $p$ to be a minimal idempotent in $(\beta S,+)$, then the subset $\{s\in S:\ T_s(x)\in U\}$ is central in $(S,+)$ since it is in $p$. Moreover, since $p+p=p$, $\{s\in S:\ T_s(px)\in U\}\in p$.

\section{Proofs of Theorem \ref{simple} and \ref{simple2}}\label{sec2.5}

In this section, we prove Theorem \ref{simple} and Theorem \ref{simple2}, which will display how our method works.  

\begin{proof}[Proof of Theorem \ref{simple}]
Fix $c\in \N$ with $c>1$. Let $X:=\{1,\dots,c\}^{\N_0\times\N_0}$ be equipped with the product topology. We define the shift transformation $\sigma_s$ for each $s\in\N$ by
\[
(\sigma_sx)(m,n)=x(m,n+sm)\ \text{for each}\ x\in X,m,n\in \N_0.
\]
Then $(X,(\sigma_s)_{s\in \N})$ is a topological dynamical system.

Fix a $c$-coloring $\xi_0\in\{1,\dots,c\}^{\N_0}$. We define $\xi\in X$ by
\[
\xi(m,n)=\xi_0(n)\ \text{for each}\ m,n\in\N_0.
\]
Take a minimal idempotent $p$ from $(\beta \N,+)$ arbitrarily. Note that $p(\xi,p\xi)=(p\xi,p\xi)$. Then by Lemma \ref{prox}, for any non-empty neighborhood $V$ of $p\xi$, one has
\[
\{s\in\N:\ \sigma_s\xi\in V,\sigma_s(p\xi)\in V\}\in p.
\]
For each $k\in\N_0$, let
\[
U_k=\{x\in X:\ x(m,n)=(p\xi)(m,n)\ \text{for any}\ m,n\leq k\}
\]
and
\[
E_k=\{s\in\N:\ \sigma_s\xi\in U_k,\sigma_s(p\xi)\in U_k\}\in p.
\] 
And we let $\xi_1\in\{1,\dots,c\}^{\N_0}$ such that
\[
\xi_1(m)=(p\xi)(m,0)\ \text{for each}\ m\in \N_0.
\]

When we restrict the map $\xi_1:\N_0\to \{1,\ldots,c\}$ to $\N$, it gives a finite coloring of $\N$. Clearly, for this coloring of $\N$, there is a color class $C\in p$ with color $i_0$, where $i_0\in \{1,\ldots,c\}$. 

Take $\lambda$ from $E_0\cap C$ arbitrarily. Then for all $a\in E_{\lambda}\cap C$, one has
\begin{equation}\label{eq3-1}
	\xi_0(\lambda a)=\xi(\lambda,\lambda a)=(\sigma_{a}\xi)(\lambda,0)=(p\xi)(\lambda,0)=\xi_1(\lambda)=i_0.
\end{equation}
Note that $E_\lambda\cap C\in p$. By \cite[Lemma 5.19 and Theorem 14.1]{HS}, we know that for any $k\in\N$, one can find a non-empty finite set $A\subseteq E_\lambda\cap C$ containing $k$-\AP\ and $k$-\FS.
By \eqref{eq3-1}, $\lambda A$ has color $i_0$.

Let $M={\rm max}(\{\lambda\}\cup A)$. Likely, we know that for each non-empty set $D\subseteq E_M\cap C$, $\lambda D$ and $AD$ are both of color $i_0$. 

Since $\sigma_s$ is continuous for each $s\in \N$, there is an $N\in\N$ with $N>M$ such that
\[
\sigma_a(U_N)\subseteq U_\lambda\ \text{for each}\ a\in A.
\]
Let $B=E_N\cap C\subset E_M\cap C$. Then for each $a\in A$ and each $b\in B$, there is
\begin{align*}
	& \xi_0(\lambda(a+b))=\xi(\lambda,\lambda(a+b))=(\sigma_{a+b}\xi)(\lambda,0)
	\\  & \hspace{3cm} =
	(\sigma_a(\sigma_b\xi))(\lambda,0)=(p\xi)(\lambda,0)=\xi_1(\lambda)=i_0.
\end{align*}
So, $\lambda(A+B)$ is of color $i_0$. 

Note that $B\in p$. So, $B$ is a central subset in $(\N,+)$.

Now, we have proved that for any finite coloring of $\N_0$, there is some $\lambda\in\N$ such that for any $k\in \N$, there are a non-empty finite set $A\subseteq \N$ containing $k$-\AP\ and $k$-\FS\ and a central set $B$ in $(\N,+)$ such that $$\lambda A\cup\lambda B\cup\lambda(A+B)\cup AB$$ is monochromatic.

Note that every finite coloring of $\N$ can be extended to be a finite coloring of $\N_0$ and $\{\lambda\}\cup A\cup B$ is a subset of $\N$.
Then this completes the proof.
\end{proof}
\begin{rem}
	In fact, the method used in the above proof can prove the following:
	
	Let $(S,+)$ be a semigroup and $\{f_s\}_{s\in S}$ be a sequence of self-homomorphisms on $(S,+)$. Then for any finite coloring of $S$ and any $r\in \N$, there exists some $\lambda\in S$ such that there are $r$-elements set $A\subseteq S$, infinite set $B\subseteq S$ which is central in $(S,+)$ and $z_1,\ldots,z_r\in S$ such that $$f_{\lambda}(A)\cup f_{\lambda}(B)\cup f_{\lambda}(A+B)\cup \{f_{x}(y):x\in A,y\in B\}$$ is monochromatic and 
	$$f_{\lambda}(\FS_{(S,+)}(z_1,\ldots,z_r))\cup\{f_{z_i}(z_j):1\le i<j\le r\}$$ is monochromatic.
\end{rem}
Next, we prove Theorem \ref{simple2}.
\begin{proof}[Proof of Theorem \ref{simple2}]
	By \cite[Theorem 5.6]{BH90}, there is a minimal idempotent $p$ in $(\beta \N,\cdot)$ such that for any $C\in p$, $C$ is central both in $(\N,+)$ and $(\N,\cdot)$. For any finite coloring of $\N$, there is a color class $D\in p + p$.
	
	Choose $\lambda$ from $\N$ such that $D':=D-\lambda \in p$. Put $$E=\{n\in \N:D'/n\in p\}\cap \{n\in \N:D-n\in p\}\cap D'.$$ Clearly, $E\in p$. By \cite[Theorem 14.1 and Theorem 16.4]{HS}, we know that one can find a non-empty finite set $A\subseteq E$ containing $k$-\AP, $k$-\FS, $k$-\GP\ and $k$-\FP. Let $$B=D'\bigcap \left(\bigcap_{a\in A}D'/a\right)\bigcap \left(\bigcap_{a\in A}(D-a)\right).$$ Clearly, $B\in p$.
	
	Then $\lambda + B\subseteq \lambda + D' \subseteq D$, $A+B\subseteq D$, $\lambda + AB\subseteq \lambda + D' \subseteq D$, and $\lambda + A\subseteq \lambda + D' \subseteq D$. This finishes the proof.
\end{proof}
Actually, by the argument like the above proof, one may give an ultrafilter proof of Theorem \ref{simple}.

\section{The first proof of Theorem \ref{higher}: Dynamical method}\label{sec3}

In this section, we give a proof for Theorem \ref{higher} via the method from dynamical systems. The proof presented below is similar to the proof of Theorem \ref{simple}. But it is more index-heavy. 

From now on, we use the symbol $c$ to denote the number of colors of some finite coloring of an infinite set $D$ and $c>1$. If we let
\[
X^{(d)}:=\{1,\dots,c\}^{D^d}
\]
be equipped with the product topology, then a $c$-coloring of $D^d$ can be viewed as an element in $X^{(d)}$.
To be convenient, we use the notation
$
(A_1,A_2,\dots,A_d)
$
to represent the set
$
\{(x_1,\dots,x_d):\ x_i\in A_i,1\leq i\leq d\}, 
$
where $A_i\subseteq D,\varnothing\neq A_i,1\le i\le d$. Similarly, if $f$ is a function mapping $D^d$ to $\C$, let
$
f(A_1,A_2,\dots,A_d)=\{f(x_1,\dots,x_d):\ x_i\in A_i,1\leq i\leq d\}.
$

\begin{proof}[Dynamical proof of Theorem \ref{higher}]
Without loss of generality, we assume that $S$ has no two-side identical elements. Now, we add an element $0$ to $S$, which satisfies the property that for any $s\in S$, $0+s=s+0=s$. Let $S_0=S\cup \{0\}$. For each $1\le k\leq d$, let
\[
X^{(k)}:=\{1,\dots,c\}^{S_0^k}
\]
be equipped with the product topology, and define the shift transformation $\sigma^{(k)}_s$ for each $s\in S$ by
\[
\left(\sigma^{(k)}_sx \right)(t_1,t_2,\dots,t_k)=x(t_1,t_2,\dots,t_k+s)\quad\text{for each}\  x\in X^{(k)},t_1,\dots,t_k\in S.
\]
Then for each $1\le k\le d$, $\left(X^{(k)},(\sigma^{(k)}_s)_{s\in S}\right)$ is a topological dynamical system.

For any $1\le k\le d$ and any non-empty $A\subseteq S,U\subseteq X^{(k)}$, let $$\sigma_{A}^{(k)}(U)=\bigcup_{s\in A}\sigma_{s}^{(k)}(U).$$

Fix $\xi^{(d)}\in X^{(d)}$. Next, we define an element
\[
\left(\xi^{(d)},\xi^{(d-1)},\dots,\xi^{(1)}\right)
\] in $X^{(d)}\times X^{(d-1)}\times\dots\times X^{(1)}$.
Take an idempotent $p$ from $(\beta S,+)$ arbitrarily and fix it. If $d=1$, we have no work to do. If $d>1$, assume that we have defined  $\xi^{(k+1)}(1\le k\le d-1)$. We define $\xi^{(k)}$ by
\[
\xi^{(k)}(t_1,\dots,t_k)=\left(p\xi^{(k+1)}\right)(t_1,\dots,t_k,0)\quad\text{for each}\  t_1,\dots,t_k\in S_0.
\]
For any $1\le k\le d$ and any non-empty finite subset $N^{(k)}$ of $S_{0}^{k}$, let
\[
U_{N^{(k)}}=\left\{x\in X^{(k)}:\ x(t_1,\dots,t_k)=\left(p\xi^{(k)}\right)(t_1,\dots,t_k)\ \text{for each}\  (t_1,\dots,t_k)\in N^{(k)}\right\}
\]
and
\[
E_{N^{(k)}}=\left\{s\in S:\ \sigma_s^{(k)}\xi^{(k)}\in U_{N^{(k)}},\sigma_s^{(k)}\left(p\xi^{(k)}\right)\in U_{N^{(k)}}\right\}.
\] Note that for any $1\le k\le d$, $p\left(\xi^{(k)},p\xi^{(k)}\right)=\left(p\xi^{(k)},p\xi^{(k)}\right)$. Then by Lemma \ref{prox}, one knows that for any $1\le k\le d$ and any non-empty finite subset $N^{(k)}$ of $S_{0}^{k}$, $E_{N^{(k)}}\in p$.

Next, we construct inductively
\begin{itemize}
\item a sequence $a_0,a_1,\dots$ in $S$
\item a sequence ${N^{(1)}_0},{N^{(1)}_1},\dots$ that are non-empty finite subsets of $S_0$
\item \dots
\item a sequence ${N^{(d)}_0},{N^{(d)}_1},\dots$ that are non-empty finite subsets of $S_{0}^d$
\end{itemize}
such that the following hold:
\begin{itemize}
\item[(C1)] $\text{for each}\  n\geq0$,
\[
a_n\in E_{N^{(1)}_n}\cap E_{N^{(2)}_n}\cap\dots\cap E_{N^{(d)}_n};
\]
\item[(C2)] $\text{for each}\  n\geq1,1\leq k\leq d$,
\[
\sigma^{(k)}_{a_{n-1}}\left(U_{N^{(k)}_n}\right)\subseteq U_{N^{(k)}_{n-1}}, N^{(k)}_n\supseteq(\FS_{(S,+)}(a_0,\dots,a_{n-1})\cup\{0\})^k\bigcup\left(\bigcup_{i=0}^{n-1}N_i^{(k)}\right);
\]
\item[(C3)] $\text{for each}\ n\ge 1,(m_0,\ldots,m_{k-1})\in \N_{0}^{k}\ \text{with}\  m_0<m_1<\dots<m_{k-1}<n,1\le k\leq \min(n,d)$, 
\begin{equation*}
\begin{split}
& \sigma^{(k)}_{\FS_{(S,+)}(a_{m_{k-1}},\dots,a_{n-1})}\left(U_{N^{(k)}_n}\right)\subseteq 
\\ & \hspace{2.5cm}
\begin{cases}
U_{\{0\}}, k=1;\\
U_{(\FS_{(S,+)}(a_{m_0},\dots,a_{m_1-1}),\dots,\FS_{(S,+)}(a_{m_{k-2}},\dots,a_{m_{k-1}-1}),\{0\})}, k>1.
\end{cases}
\end{split}
\end{equation*}
\end{itemize}

For each $1\le k\le d$, let $N^{(k)}_0=\{(0,0,\dots,0)\}\subseteq S_{0}^{k}$. Take $a_0$ from $E_{N^{(1)}_0}\cap E_{N^{(2)}_0}\cap\dots\cap E_{N^{(d)}_0}$ arbitrarily. Then for $n=0$, (C1) holds.

Suppose that we have constructed $\{a_j\}_{0\le j<n}$, $\{N_{j}^{(i)}\}_{0\le j<n,1\le i\le d}$ such that for any $0\le j<n$, (C1) - (C3) hold, where $n\ge 1$. Next, we construct $a_n,N_{n}^{(i)},1\le i\le d$ under the requirements of (C1) - (C3).

For any $1\leq k\leq d$, by applying (C1) to $n-1$, we have that
\[
\sigma^{(k)}_{a_{n-1}}\left(p\xi^{(k)}\right)\in U_{N^{(k)}_{n-1}}.
\]
Since $\sigma_{a_{n-1}}^{(k)}$ is continuous, there is a non-empty finite subset $N^{(k)}_n$ of $S_{0}^{k}$ such that
\[
\sigma^{(k)}_{a_{n-1}}\left(U_{N^{(k)}_n}\right)\subseteq U_{N^{(k)}_{n-1}}\ \text{and}\ N^{(k)}_n\supseteq(\FS_{(S,+)}(a_0,\dots,a_{n-1})\cup\{0\})^k\bigcup\left(\bigcup_{i=0}^{n-1}N_i^{(k)}\right).
\]
Take $a_n$ from $E_{N^{(1)}_n}\cap E_{N^{(2)}_n}\cap\dots\cap E_{N^{(d)}_n}$ arbitrarily. Then for $a_n$, (C1) holds and for $N_{n}^{(i)},1\le i\le d$, (C2) holds. 

Next, we verify that $N_{n}^{(i)},1\le i\le \min(d,n)$ satisfy (C3).

Fix $1\le k\le \min(d,n)$. Fix $(m_0,\ldots,m_{k-1})\in \Z^{k}\ \text{with}\ 0\leq m_0<m_1<\dots<m_{k-1}<n$. Let us consider the case $k>1$ first. If $n=m_{k-1}+1$, by applying (C2) to $n-1$, we know that 
\begin{align*}
\sigma^{(k)}_{\FS_{(S,+)}(a_{m_{k-1}},\dots,a_{n-1})}\left(U_{N^{(k)}_n}\right)&=\sigma^{(k)}_{a_{n-1}}\left(U_{N^{(k)}_n}\right)\subseteq U_{N^{(k)}_{n-1}}\\
&\subseteq U_{(\FS_{(S,+)}(a_{m_0},\dots,a_{m_1-1}),\dots,\FS_{(S,+)}(a_{m_{k-2}},\dots,a_{m_{k-1}-1}),\{0\})}.
\end{align*}
If $n\geq m_{k-1}+2$, then by applying (C3) to $n-1$, we have that
\[
\sigma^{(k)}_{\FS_{(S,+)}(a_{m_{k-1}},\dots,a_{n-2})}\left(U_{N^{(k)}_{n-1}}\right)\subseteq U_{(\FS_{(S,+)}(a_{m_0},\dots,a_{m_1-1}),\dots,\FS_{(S,+)}(a_{m_{k-2}},\dots,a_{m_{k-1}-1}),\{0\})}.
\]
Hence, by the definition of $N_{n}^{(k)}$, we have that 
\begin{align*}
\sigma^{(k)}_{\FS_{(S,+)}(a_{m_{k-1}},\dots,a_{n-2})+a_{n-1}}\left(U_{N^{(k)}_{n}}\right)&\subseteq\sigma^{(k)}_{\FS_{(S,+)}(a_{k-1},\dots,a_{n-2})}\left(U_{N^{(k)}_{n-1}}\right)\\
&\subseteq U_{(\FS_{(S,+)}(a_{m_0},\dots,a_{m_1-1}),\dots,\FS_{(S,+)}(a_{m_{k-2}},\dots,a_{m_{k-1}-1}),\{0\})}.
\end{align*}
To sum up,
\begin{align*}
&\sigma^{(1)}_{\FS_{(S,+)}(a_{m_{k-1}},\dots,a_{n-1})}\left(U_{N^{(k)}_{n}}\right)
\\ = &
\sigma^{(k)}_{\FS_{(S,+)}(a_{m_{k-1}},\dots,a_{n-2})}\left(U_{N^{(k)}_n}\right)\bigcup\sigma^{(k)}_{\FS_{(S,+)}(a_{m_{k-1}},\dots,a_{n-2})+a_{n-1}}\left(U_{N^{(k)}_n}\right)\bigcup \sigma^{(k)}_{a_{n-1}}\left(U_{N^{(k)}_n}\right)
\\ \subseteq & U_{(\FS_{(S,+)}(a_{m_0},\dots,a_{m_1-1}),\dots,\FS_{(S,+)}(a_{m_{k-2}},\dots,a_{m_{k-1}-1}),\{0\})}.
\end{align*}

When $k=1$, by applying (C2) to $n-1,\ldots,0$ and the choice of $N_{n}^{(1)}$, we know that 
$$\sigma^{(1)}_{\FS_{(S,+)}(a_{m_{0}},\dots,a_{n-1})}\left(U_{N^{(1)}_{n}}\right)\subseteq U_{N_{m_0}^{(1)}}\subseteq U_{\{0\}}.$$

Therefore, for $N_{n}^{(i)},1\le i\le \min(d,n)$, (C3) holds.

By the construction of sequence $\{a_n\}_{n\ge 0}\subseteq S$, we know that for any $(m_0,\ldots,m_d)\in \Z^{d+1}$ with $0\leq m_0<m_1<\dots<m_d$,
\begin{align*}
&\xi^{(d)}(\FS_{(S,+)}(a_{m_0},\dots,a_{m_1-1}),\dots,\FS_{(S,+)}(a_{m_{d-1}},\dots,a_{m_d-1}))\\
=&\left(\sigma^{(d)}_{\FS_{(S,+)}(a_{m_{d-1}},\dots,a_{m_d-1})}\xi^{(d)}\right)(\FS_{(S,+)}(a_{m_0},\dots,a_{m_1-1}),\dots,\\ & \hspace{6cm}\FS_{(S,+)}(a_{m_{d-2}},\dots,a_{m_{d-1}-1}),0)\\
=&\left(p\xi^{(d)}\right)(\FS_{(S,+)}(a_{m_0},\dots,a_{m_1-1}),\dots,\FS_{(S,+)}(a_{m_{d-2}},\dots,a_{m_{d-1}-1}),0)\\
=&\xi^{(d-1)}(\FS_{(S,+)}(a_{m_0},\dots,a_{m_1-1}),\dots,\FS_{(S,+)}(a_{m_{d-2}},\dots,a_{m_{d-1}-1}))\\
=&\cdots\\
=&\xi^{(1)}(\FS_{(S,+)}(a_{m_0},\dots,a_{m_1-1}))\\
=&\left(\sigma^{(1)}_{\FS_{(S,+)}(a_{m_0},\dots,a_{m_1-1})}\xi^{(1)}\right)(0)\\
=&\left(p\xi^{(1)}\right)(0).
\end{align*}

Now, we have proved that for any finite coloring of $S_{0}^d$, there is a sequence $\{a_i\}_{i=0}^{\infty}$ in $S$ such that the set
\[
\bigcup_{(m_0,\ldots,m_d)\in \Z^{d+1},\atop 0\leq m_0<m_1<\dots<m_d}\{(x_1,\dots,x_d):\ x_i\in\FS_{(S,+)}(\{a_j\}_{j=m_{i-1}}^{m_{i}-1})\ \text{for each}\ 1\leq i\leq d\}
\]
is monochromatic.

Note that every finite coloring of $S^d$ can be extended to be a finite coloring of $S_{0}^{d}$ and for any $0\le i\le j$, $\FS_{(S,+)}(a_i,\ldots,a_j)\subset S$.
Then this completes the proof.
\end{proof}

\section{The second proof of Theorem \ref{higher}: Ultrafilter method}\label{sec4}

In this section, we give a proof for Theorem \ref{higher} by using the ultrafilter approach. As Furstenberg's comment \cite[First paragraph of Section 3, Chapter 8]{F} on the dynamical proof and the ultrafilter proof for Hindman's theorem, it is similar here. 

Before the proof, we introduce some notions. For an ultrafilter $p\in \beta S$, we define its the \textbf{$k$-th tensor} $p^k$ by the following:
\[
A\in p^k\iff\{s_1\in S:\ \{s_2\in S:\{\cdots\{s_k\in S:\ (s_1,\ldots,s_k)\in A\}\in p\}\cdots\}\in p\}\in p.
\]
Clearly, $p^n\in\beta\left(S^n\right)$ for any $n\ge 1$.

Here, we give a remark for the definition. For the usual embedding function from $S^n$ to $\beta\left(S^n\right)$, one can lift it to be a map from $(\beta S)^n$ to $\beta\left(S^n\right)$. At this time, $p^n$ is exactly the image of $(p,p,\dots,p)\in \left(\beta S\right)^n$ under the map. For more on the tensors of ultrafilters, see \cite[Section 1, Chapter 11]{HS}.

Now, let us recall some facts for the idempotents in $(\beta S,+)$. Recall that
\[
A\in p+q\quad\text{if and only if}\quad\{s\in S: A-s\in q\}\in p.
\]
Hence, when $p$ is an idempotent in $(\beta S,+)$ and $A\in p$, there is
\begin{equation}\label{idem}
\{s\in S:\ A-s\in p\}\in p.
\end{equation}
\eqref{idem} is the core of the proof presented below, and will be frequently used.

Note that for any finite coloring of $S^d$ and any $p\in \beta S$, there is one color class that belongs to $p^d$. Therefore, the following implies Theorem \ref{higher}.

\begin{thm}
Let $d\in \N, (S,+)$ be a semigroup and $p$ be an idempotent in $(\beta S,+)$. Then for all $C\in p^d$, there is a sequence $\{a_i\}_{i=0}^{\infty}$ in $S$ such that for all $(m_0,\ldots,m_d)\in \Z^{d+1}$ with $0\le m_0<m_1<\dots<m_d$, one has
\[
\{(x_1,\dots,x_d):\ x_i\in\FS_{(S,+)}(a_{m_{i-1}},\dots,a_{{m_i}-1}),1\leq i\leq d\}\subseteq C.
\]
\end{thm}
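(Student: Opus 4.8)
The plan is to give a Galvin--Glazer style construction, generalizing the standard ultrafilter proof of Hindman's theorem (the case $d=1$, which recovers Theorem \ref{N1}) to the tensor setting. Throughout I would use the idempotent equation \eqref{idem} in the form of the following standard fact: for $A\in p$, writing $A^{\star}=\{x\in A:\ A-x\in p\}$, one has $A^{\star}\in p$, and for every $x\in A^{\star}$ one has $A-x\in p$ and moreover $A^{\star}-x\in p$ (indeed $A^{\star}-x=(A-x)^{\star}$, using associativity of $+$). I also record a peeling identity: a set $B\subseteq S^{k}$ lies in $p^{k}$ if and only if its last-coordinate reduction
\[
B^{\flat}:=\{(x_{1},\dots,x_{k-1}):\ \{t\in S:\ (x_{1},\dots,x_{k-1},t)\in B\}\in p\}
\]
lies in $p^{k-1}$; this is immediate from the nested definition of the tensor $p^{k}$.

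Using this, I first define a descending family $C^{(d)}=C$, $C^{(d-1)}=(C^{(d)})^{\flat}$, \dots, $C^{(1)}=(C^{(2)})^{\flat}$, so that $C^{(k)}\in p^{k}$ for every $1\le k\le d$; it is convenient to set $C^{(0)}=\{\ast\}$ (the one-point set $S^{0}$), which trivially lies in $p^{0}$. For a prefix $(x_{1},\dots,x_{k-1})\in S^{k-1}$ write $C^{(k)}[x_{1},\dots,x_{k-1}]=\{t\in S:\ (x_{1},\dots,x_{k-1},t)\in C^{(k)}\}$, so that by construction $(x_{1},\dots,x_{k-1})\in C^{(k-1)}$ if and only if $C^{(k)}[x_{1},\dots,x_{k-1}]\in p$. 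With this notation the goal unwinds, coordinate by coordinate, to the following: for each $1\le k\le d$ and each \emph{admissible} prefix (one lying in $C^{(k-1)}$) built from blocks among the earlier indices, the finite sums of the $k$-th block should all fall into the section $C^{(k)}[x_{1},\dots,x_{k-1}]\in p$. In other words, the theorem reduces to running $d$ interlocked one-dimensional Hindman constructions, one per coordinate, coupled through these sections.

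Concretely, I would build $\{a_{n}\}_{n\ge 0}$ one term at a time while maintaining the following invariant. At a stage $n$, call a datum \emph{active} if it consists of an index $1\le k\le d$, a block pattern $0\le m_{0}<\dots<m_{k-1}\le n$, a choice $x_{i}\in\FS_{(S,+)}(a_{m_{i-1}},\dots,a_{m_{i}-1})$ for $1\le i\le k-1$ with $C^{(k)}[x_{1},\dots,x_{k-1}]\in p$, and a partial $k$-th block sum $y\in\FS_{(S,+)}(a_{m_{k-1}},\dots,a_{n-1})\cup\{0\}$; the invariant is that for every active datum the translated section $C^{(k)}[x_{1},\dots,x_{k-1}]-y$ lies in $p$. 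Since every index occurring in an active datum is smaller than $n$, at each stage there are only finitely many active data, so
\[
W_{n}=\bigcap\Big\{\big(C^{(k)}[x_{1},\dots,x_{k-1}]-y\big)^{\star}:\ (k,\vec m,\vec x,y)\ \text{is active at stage}\ n\Big\}
\]
is a finite intersection of members of $p$, whence $W_{n}\in p$; I would then choose $a_{n}\in W_{n}$ arbitrarily. The star property guarantees that each translated section stays in $p$ after replacing $y$ by $y+a_{n}$ (using $(T-y)-a_{n}=T-(y+a_{n})$), so the invariant propagates; and when a new block opens at $m_{k-1}=n$, the needed sections $C^{(k)}[x_{1},\dots,x_{k-1}]$ are already in $p$ precisely because those prefixes are admissible.

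Finally I would verify the conclusion by induction on $k$: for all $0\le m_{0}<\dots<m_{k}$ and all $x_{i}\in\FS_{(S,+)}(a_{m_{i-1}},\dots,a_{m_{i}-1})$ one has $(x_{1},\dots,x_{k})\in C^{(k)}$. The base $k=0$ is trivial; in the step the prefix $(x_{1},\dots,x_{k-1})$ lies in $C^{(k-1)}$ by hypothesis, so $T:=C^{(k)}[x_{1},\dots,x_{k-1}]\in p$, and writing $x_{k}=a_{j_{1}}+\dots+a_{j_{r}}$ with $m_{k-1}\le j_{1}<\dots<j_{r}<m_{k}$, the star conditions imposed at stages $j_{1}<\dots<j_{r}$ (for this fixed prefix and the partial sums $0,a_{j_{1}},a_{j_{1}}+a_{j_{2}},\dots$) give successively $a_{j_{1}}\in T$, $a_{j_{1}}+a_{j_{2}}\in T$, \dots, $x_{k}\in T$, exactly as in the one-dimensional argument; taking $k=d$ yields $(x_{1},\dots,x_{d})\in C$. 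The main obstacle is organizing this simultaneous, multi-level bookkeeping: one must set up the invariant so that at every stage only finitely many data are active (this is what forces $W_{n}\in p$), and so that the $d$ coordinatewise constructions interlock consistently, since admissibility of a level-$k$ prefix is itself a level-$(k-1)$ conclusion and only the forward, increasing-index order lets these dependencies resolve (each prefix is completed before the block it governs begins). The idempotent equation \eqref{idem} supplies every local step, so the genuine work is purely in the indexing, which is the exact analogue of the index-heavy conditions (C1)--(C3) of the dynamical proof in Section \ref{sec3}.
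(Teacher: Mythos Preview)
Your proposal is correct and follows essentially the same route as the paper's ultrafilter proof: both unwind $C\in p^{d}$ into nested sections (your $C^{(k)}[x_{1},\dots,x_{k-1}]$ coincide with the paper's $C^{(k)}_{x_{1},\dots,x_{k-1}}$), then run a Galvin--Glazer construction where at each stage one intersects finitely many $p$-large sets---starred for propagation via \eqref{idem}---and picks an element. The only difference is organizational: you package the stage-$n$ constraints as ``active data'' and intersect them all at once into $W_{n}$, whereas the paper builds the corresponding set $A_{n}$ recursively from $A_{n-1}$ and then sets $C_{n}=A_{n}\cap\{s:A_{n}-s\in p\}$; your final verification by induction on $k$ plays the role of the paper's condition (C2).
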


\begin{proof}
Firstly, for the case $d=1$, we inductively construct three sequences $\{b_n\}_{n\ge 0}\subseteq S$, $\{B_n\}_{n\ge 0}\subseteq p$ and $\{D_n\}_{n\ge 0}\subseteq p$. Let $B_0=C,D_0=B_0\cap \{s\in S:B_0-s\in p\}$. By\eqref{idem}, $D_0\in p$. Take $b_0$ from $D_0$ arbitrarily.
Suppose that we have constructed $\{b_j\}_{0\le j<n}\subseteq S$, $\{B_j\}_{0\le j<n}\subseteq p$ and $\{D_j\}_{0\le j<n}\subseteq p$, where $n\ge 1$. Next, we construct $b_n$, $B_n$ and $D_n$. Let $B_n=D_{n-1}\cap (B_{n-1}-a_{n-1})\in p$ and $D_n=B_n\cap \{s\in S:B_n-s\in p\}\in p$. Take $b_n$ from $D_n$ arbitrarily. It is easy to see that for any $0\le m_0<m_1$, we have $\FS_{(S,+)}(b_{m_0},\ldots,b_{m_{1}-1})\subseteq C$.

For the rest of the proof, we can assume that $d>1$. Let
\[
C^{(1)}=\{s_1\in S:\ \{s_2\in S:\{\cdots\{s_d\in S:\ (s_1,\dots,s_d)\in C\}\in p\}\cdots\}\in p\}.
\] For any $2\leq k\leq d$ and any $s_1,\dots,s_{k-1}\in S$, let
\[
C^{(k)}_{s_1,\dots,s_{k-1}}=\{s_k\in S:\ \{s_{k+1}\in S:\{\cdots\{s_d\in S:\ (s_1,\dots,s_d)\in C\}\in p\}\cdots\}\in p\}.
\]
Then $C^{(1)}\in p$, and
\begin{equation}\label{u1}
	C^{(k)}_{s_1,\dots,s_{k-1}}\in p\quad\text{if}\quad s_i\in C^{(i)}_{s_1,\dots,s_{i-1}} \quad\text{for each}\  2\le i<k\ \text{and}\ s_1\in C^{(1)}.
\end{equation}
Next, we inductively construct three sequences $\{a_n\}_{n\ge 0}\subseteq S$, $\{C_n\}_{n\ge 0}\subseteq p$ and $\{A_n\}_{n\ge 0}\subseteq p$ under the following requirements:
\begin{itemize}
	\item[(C1)] for any $n\ge 0$, $a_n\in C_n=A_n\cap \{s\in S:A_n-s\in p\}$;
	\item[(C2)] for any $n\ge 0$, any $2\le j\le \min(d,n+2)$, any $(m_0,\ldots,m_{j-1})\in \Z^j$ with $0\le m_0<\cdots<m_{j-1}<n+2$ and any $x_i\in \FS_{(S,+)}(a_{m_{i-1}},\ldots,a_{m_{i}-1}),1\le i\le j-1$, one has
	\begin{equation}\label{c2}
		C^{(j)}_{x_1,\ldots,x_{j-1}}\in p\ \text{and}\ x_{j-1}\in C^{(j-1)}_{x_1,\ldots,x_{j-2}},
	\end{equation}
	 where we view $C^{(1)}_{x_1,\ldots,x_0}$ as $C^{(1)}$.
\end{itemize}
By \eqref{idem}, $\{s\in S:C^{(1)}-s\in p\}\in p$. Let $$C_0=C^{(1)}\cap \{s\in S:C^{(1)}-s\in p\}\in p,A_0=C^{(1)}\in p.$$ Take $a_0$ from $C_0$ arbitrarily. Then for $n=0$, (C1) and (C2) hold.
Suppose that we have constructed $\{a_j\}_{0\le j<n}\subseteq S$, $\{C_j\}_{0\le j<n}\subseteq p$ and $\{A_j\}_{0\le j<n}\subseteq p$ under (C1) and (C2), where $n\ge 1$. Next, we construct $a_n$, $C_n$ and $A_n$.

Let $A_n$ be the set $$C_{n-1}\bigcap \left(A_{n-1}-a_{n-1}\right)\bigcap \left(\bigcap_{j=2}^{\min(d,n+1)}\bigcap_{(m_0,\ldots,m_{j-1})\in \Z^j,\atop 0\le m_0<\cdots<m_{j-1}<n+1}\bigcap_{x_i\in \FS_{(S,+)}(a_{m_{i-1}},\ldots,a_{m_{i}-1}),\atop 1\le i\le j-1}C^{(j)}_{x_1,\ldots,x_{j-1}}\right)$$ and $$C_n=A_n\cap \{s\in S:A_n-s\in p\}.$$ Then by the inductive hypothesis and \eqref{idem}, $A_n,C_n\in p$.
Take $a_n$ from $C_n$ arbitrarily. Clearly, for $n$, (C1) holds. Next, we verify that for $n$, (C2) holds.

Fix $2\le j\le \min(d,n+2)$, $(m_0,\ldots,m_{j-1})\in \Z^j$ with $0\le m_0<\cdots<m_{j-1}<n+2$ and $$x_i\in \FS_{(S,+)}(a_{m_{i-1}},\ldots,a_{m_{i}-1}),1\le i\le j-1.$$ Let us consider the case $d\le n+1$ first.
If $m_{j-1}<n+1$, then by applying (C2) to $n-1$, we know that \eqref{c2} holds.
If $m_{j-1}=n+1$, when $m_{j-2}=n$, $x_{j-1}=a_n$;
when $m_{j-2}<n$, $x_{j-1}=y$ or $y+a_n$, where $y\in \FS_{(S,+)}(a_{m_{j-2}},\ldots,a_{n-1})$. By the construction of $C_n$, one has $$a_n\in C_n\subseteq C^{(j-1)}_{x_1,\ldots,x_{j-2}}(j>2), a_n\in C_n\subseteq C^{(1)},$$ $$a_n\in C_n\subseteq C^{(j-1)}_{x_1,\ldots,x_{j-2}}-y(j>2)\ \text{and}\ a_n\in C_n\subseteq C^{(1)}-y.$$ So, $x_{j-1}\in C^{(j-1)}_{x_1,\ldots,x_{j-2}}$. By applying (C2) to $n-1$ and \eqref{u1}, we know that $$C^{(j)}_{x_1,\ldots,x_{j-1}}\in p.$$ 

Now, let us consider the case $d>n+1$. If $2\le j\le n+1$, by a similar argument as one of the case $d\le n+1$, we know that \eqref{c2} holds. If $j=n+2$, we need to verify that $$C^{(n+2)}_{a_0,\ldots,a_n}\in p\ \text{and}\ a_{n}\in C^{(n+1)}_{a_0,\ldots,a_{n-1}}.$$
By the construction of $C_n$, we have $a_n\in C_n\subseteq A_n\subseteq C^{(n+1)}_{a_0,\ldots,a_{n-1}}$. Based on this, by applying (C2) to $n-1$ and \eqref{u1}, we have $C^{(n+2)}_{a_0,\ldots,a_n}\in p$.

To sum up, for $n$, (C2) holds.

Now, we verify that the sequence $\{a_n\}_{n\ge 0}$ satisfies our requirement. Fix $(m_0,\ldots,m_d)\in \Z^{d+1}$ with $0\le m_0<m_1<\dots<m_d$ and $x_i\in\FS_{(S,+)}(a_{m_{i-1}},\dots,a_{{m_i}-1}),1\leq i\leq d$. Assume that $x_d=a_r+\cdots+a_n,$ where $m_{d-1}\le r\le n\le {m_d}-1$. Note that if $r<n$, we let $x_d=z+a_n$. Then by the construction of $C_n$, $$a_n\in (A_{n-1}-a_{n-1})\cap A_{n-1}\subseteq\cdots \subseteq A_{r}-z\subseteq C^{(d)}_{x_1,\ldots,x_{d-1}}-z.$$ If $r=n$, then $x_d=a_n\in C_n\subseteq A_n\subseteq C^{(d)}_{x_1,\ldots,x_{d-1}}$.

So, $\displaystyle x_d\in C^{(d)}_{x_1,\ldots,x_{d-1}}$. Then by (C2) and \eqref{u1}, we have $(x_1,\ldots,x_d)\in C$.
This completes the proof.
\end{proof}

Now, we briefly illustrate the relation between dynamical proof and ultrafilter proof. To be more understandable, let us focus on $(\N,+)$. Let $\xi$ be a finite coloring $\N_0\to \{1,\ldots,c\}$, and $p$ be an idempotent in $(\beta \N,+)$. For each $1\le i\le c$, let $C_i=\N\cap \xi^{-1}(\{i\})$. Then
\[
\left(p\xi\right)(0)=\text{the unique color}\ i_0\ \text{such that }\ C_{i_0}\in p.
\]
Generally, for each $s\in S$,
\[
\left(p\xi\right)(s)=\text{the unique color}\ i_s\ \text{such that}\ C_{i_s}-s\in p.
\]
In the dynamical proof, we need that $\xi$ and $p\xi$ are recurrent to $p\xi$ simultaneously.
For a neighborhood
\[
U_N:=\{x\in \{1,\dots,c\}^{\N_0}:\ x(t)=(p\xi)(t),\forall t\in N\},
\] of $p\xi$, 
where $N$ is a finite subset of $\N_0$, we have
\[
\sigma_s\xi\in U_N \iff s\in\bigcap_{t\in N}(C_{i_t}-t).
\]
We also need $\sigma_s(p\xi)\in U_N$, which means that
\[
(p\xi)(t+s)=(p\xi)(t)=i_t\quad\text{for each}\  t\in N,
\]
Therefore, we need that $C_{i_t}-t-s\in p$ for any $t\in N$, which is equivalent to
\[
s\in\bigcap_{t\in N}\{r\in \N:\ C_{i_t}-t-r\in p\}.
\]
This explains the reason why the dynamical proof and the ultrafilter proof run through similar lines.

\section{Proofs of Theorem \ref{multi} and \ref{multi2}}\label{sec5}

 Note that by any one of two proofs for Theorem \ref{higher}, we actually prove the following.

\begin{thm}\label{higher2}
Let $(S,+)$ be a semigroup, $d\in\N$ and $p$ be an idempotent in $(\beta S,+)$. Then for any finite coloring of $S^d$, there is $C_0\in p$ such that for any $a_0\in C_0$, there is $C_{1;a_0}\in p$ such that for any $a_1\in C_{1;a_0}$, there is $C_{2;a_0,a_1}\in p$ such that ...... 
finally, we get a sequence $\{a_n\}_{n\ge 0}$ in $S$ such that
the set
\[
\bigcup_{(m_0,\ldots,m_d)\in \Z^{d+1},\atop 0\leq m_0<m_1<\dots<m_d}\{(x_1,\dots,x_d):\ x_i\in\FS_{(S,+)}(a_{m_{i-1}},\dots,a_{{m_i}-1})\ \text{for each}\  1\leq i\leq d\}
\]
is monochromatic and the related color only depends on $p$ and correspoding finite coloring of $S^d$.
\end{thm}
Now, we begin to prove Theorem \ref{multi}.
\begin{proof}[Proof of Theorem \ref{multi}]
Fix $c$-coloring $\xi\in \{1,\dots,c\}^S$ and $1\le d\le D$. We color $S^d$ such that
\[
(s_1,s_2,\dots,s_d)\ \text{has color}\ \xi\left((((s_1\circ s_2)\circ s_3)\circ\cdots)\circ s_d\right)\quad \text{for each}\  (s_1,\ldots,s_d)\in S^d.
\]And we use the symbol $\xi^{(d)}$ to denote this color.
Take an idempotent $p$ in $(\beta S,+)$ and fix it. Then by applying Theorem \ref{higher2} to $\xi^{(d)}$ and $p$, there is $C_0^{(d)}\in p$ such that for any $b_0^{(d)}\in C_0^{(d)}$, there is $C_{1;b_0^{(d)}}^{(d)}\in p$ such that for any $b_1^{(d)}\in C_{1;b_0^{(d)}}^{d}$, there is $C_{2;b_0^{(d)},b_1^{(d)}}^{(d)}\in p$ such that ...... 
finally, we get a sequence $\{b_n^{(d)}\}_{n\ge 0}$ in $S$ such that
the set
\[
\bigcup_{(m_0,\ldots,m_d)\in \Z^{d+1},\atop 0\leq m_0<m_1<\dots<m_d}\{(((x_1\circ x_2)\circ x_3)\circ\cdots)\circ x_d:\ x_i\in\FS_{(S,+)}(b_{m_{i-1}}^{(d)},\dots,b_{{m_i}-1}^{(d)})\ \text{for each}\  1\leq i\leq d\}
\]
is monochromatic (with respect to $\xi$) and the related color only depends on $d$.

Let $a_0\in \displaystyle\bigcap_{d=1}^{D}C^{(d)}_0$. Suppose that we have constructed $\{a_m\}_{0\le m<n}$, where $n\ge 1$. Now, let $$a_n\in \displaystyle\bigcap_{d= 1}^{D}C^{(d)}_{n;a_0,\ldots,a_{n-1}}.$$ 

Then we get a sequence $\{a_n\}_{n\ge 0}$ in $S$ which satisfies the property that for all $1\le d\le D$, the set
\[
\bigcup_{(m_0,\ldots,m_d)\in \Z^{d+1},\atop 0\leq m_0<m_1<\dots<m_d}\{(((x_1\circ x_2)\circ x_3)\circ\cdots)\circ x_d:\ x_i\in\FS_{(S,+)}(\{a_j\}_{j=m_{i-1}}^{m_{i}-1})\ \text{for each}\ 1\leq i\leq d\}
\]
is monochromatic (with respect to $\xi$), where the color of the set depends on $d$. This finishes the proof.
\end{proof}
Next, we are about to prove Theorem \ref{multi2}.
\begin{proof}[Proof of Theorem \ref{multi2}]
Fix a $c$-coloring $\xi\in \{1,\dots,c\}^{\N}$. Take an idempotent $p$ from $(\beta \N,+)$ and fix it. As in the proof of Theorem \ref{multi}, for any $d\in \N$, there is $C_0^{(d)}\in p$ such that for any $b_0^{(d)}\in C_0^{(d)}$, there is $C_{1;b_0^{(d)}}^{(d)}\in p$ such that for any $b_1^{(d)}\in C_{1;b_0^{(d)}}^{d}$, there is $C_{2;b_0^{(d)},b_1^{(d)}}^{(d)}\in p$ such that ...... 
finally, for any $d\in\N$, we get a sequence $\{b_n^{(d)}\}_{n\ge 0}$ in $S$ such that
the set
\[
\bigcup_{(m_0,\ldots,m_d)\in \Z^{d+1},\atop 0\leq m_0<m_1<\dots<m_d}\{(((x_1\circ x_2)\circ x_3)\circ\cdots)\circ x_d:\ x_i\in\FS_{(\N,+)}(b_{m_{i-1}}^{(d)},\dots,b_{{m_i}-1}^{(d)}),\forall 1\leq i\leq d\}
\]
is monochromatic (with respect to $\xi$) and the related color only depends on $d$,  where we use $i_d$ to denote the color.

So, we get a finite coloring $\eta:\N\to \{1,\ldots,c\}$ such that for each $d\in \N$, $\eta(d)=i_d$.

Applying Theorem \ref{higher2} to $(\N,+),\eta$ and $p$, there is $D_0\in p$ such that for any $x_0\in D_0$, there is $D_{1;x_0}\in p$ such that for any $x_1\in D_{1;x_0}$, there is $D_{2;x_0,x_1}\in p$ such that ...... 
finally, we get a sequence $\{x_n\}_{n\ge 0}$ in $S$ such that
the set $\FS_{(\N,+)}(\{x_n\}_{n\ge 0})$ is monochromatic (with respect to $\eta$).

Take $a_0$ from $D_0$ arbitrarily. Suppose that we have constructed $\{a_j\}_{0\le j<n}$, where $n\ge 1$. Then we take $a_n$ from $\displaystyle D_{n;a_0,\ldots,a_{n-1}}\bigcap \left(\bigcap_{j=1}^{n}\bigcap_{d\in \FS_{(\N,+)}(a_0,\ldots,a_{j-1})}C^{(d)}_{n-j;a_j,\ldots,a_{n-1}}\right)$ arbitrarily, where for each $d\in \N$, we view $C^{(d)}_{0;a_n,\ldots,a_{n-1}}$ as $C^{(d)}_0$.

By the construction of $\{a_n\}_{n\ge 0}$, we know that the set
\begin{align*}
\bigcup_{m_0\in\N}\bigcup_{d\in\FS_{(\N,+)}(\{a_j\}_{j=0}^{m_{0}-1})}\bigcup_{(m_1,\ldots,m_d)\in \N^{d},\atop m_0<m_1<\dots<m_d}&\{(((x_1\circ x_2)\circ x_3)\circ\cdots)\circ x_d:\\
&x_i\in\FS_{(\N,+)}(\{a_j\}_{j=m_{i-1}}^{m_{i}-1})\ \text{for each}\  1\leq i\leq d\}
\end{align*}
is monochromatic (with respect to $\xi$). This finishes the proof. 
\end{proof}

\bibliographystyle{plain}
\bibliography{ref}
\end{document}